\theoremstyle{definition}
\newtheorem{definition}{\bf Definition}[section]
\newtheorem{theorem}{Theorem}[section]
\newtheorem{cor}{Corollary}[section]
\newtheorem{prop}{Proposition}[section]
\newtheorem{remark}{Remark}[section]
\newtheorem{example}{Example}[section]
\renewenvironment{proof}{{\bfseries \noindent Proof} }{ \qed \\}
\begin{document}

\def\R{\mathbb{R}}                   
\def\Z{\mathbb{Z}}                   
\def\Q{\mathbb{Q}}                   
\def\C{\mathbb{C}}                   
\def\N{\mathbb{N}}                   
\def\uhp{{\mathbb H}}                
\def\A{\mathbb{A}}

\def\cf{r}
\def\T{\sf{T}}
\def\Hess{\text{Hess}}
\def\sing{\text{sing}}
\def\supp{\text{supp}}
\def\CH{{\rm CH}}
\def\P{\mathbb{P}}
\def\Gal{\text{Gal}}
\def\pr{\text{pr}}
\def\GHod{\text{GHod}}
\def\SHod{\text{SHod}}
\def\Hod{\text{Hod}}
\def\res{\text{res}}
\def\prim{\text{prim}}
\def\dR{\text{dR}}
\def\Jac{\text{Jac }}

\def\ker{{\rm ker}}              
\def\GL{{\rm GL}}                \def\HF{{\rm HF}}                
\def\ker{{\rm ker}}              
\def\coker{{\rm coker}}          
\def\im{{\rm Im}}               
\def\coim{{\rm Coim}}            

\def\End{{\rm End}}              
\def\rank{{\rm rank}}                
\def\gcd{{\rm gcd}}                  

\begin{center}
{\LARGE\bf Periods of join algebraic cycles
}\footnote{MSC2020: 14C25, 14C30, 14D07}
\\
\vspace{.25in} {\large {{\sc Jorge Duque Franco and Roberto Villaflor Loyola }}}
\end{center}


\begin{abstract}
We show, for all $n\ge 2$ even and $d\ge 2+\frac{4}{n}$, that the moduli of smooth degree $d$ hypersurfaces of $\mathbb{P}^{n+1}$ contains infinitely many different Hodge loci whose Zariski tangent space has the same codimension as the Hodge locus of linear cycles. We construct the Hodge cycles determining those Hodge loci as joins of $0$-dimensional cycles inside hypersurfaces of $\mathbb{P}^1$ with all their closed points defined over $\mathbb{Q}$. In order to analyze the cycle classes of these algebraic cycles, we establish a general formula for the cycle class of the join of any two algebraic cycles inside smooth hypersurfaces, expressed in terms of their periods. Furthermore, we prove that an algebraic cycle is a join of algebraic cycles, if and only if, its associated Artin Gorenstein algebra is the tensor product of the Artin Gorenstein algebras associated to each generating cycle.
\end{abstract}

\section{Introduction}
\label{intro}
Let $n\ge 2$ even and $d\ge 2+\frac{4}{n}$. Let us consider $\pi:\mathcal{X}\rightarrow U$ the family  of all smooth degree $d$ hypersurfaces of $\P^{n+1}$, where $U\subseteq H^0(\mathcal{O}_{\P^{n+1}}(d))$ is the open complement of the discriminant locus. For each $f\in U$ we denote by $X_f:=\pi^{-1}(f)=\{f=0\}\subseteq\P^{n+1}$ its corresponding hypersurface. For $f_0\in U$ fixed, let $X=X_{f_0}$. Every Hodge cycle $\delta=\delta_{f_0}\in H^{\frac{n}{2},\frac{n}{2}}(X,\Q)$ extends by parallel transport to $\delta_f\in H^n(X_f,\Q)$ for $f$ in any simply connected analytic neighbourhood of $f_0$. The \textit{Hodge locus} induced by $\delta$ is the analytic scheme supported in the germ of analytic subvariety
\begin{equation}
\label{eq:localhodge}
V_\delta:=\{f\in (U,f_0): \delta_f\in H^{\frac{n}{2},\frac{n}{2}}(X_f,\Q)\}.
\end{equation}
Using the Infinitesimal Variation of Hodge Structure (IVHS) developed by Carlson-Green-Griffiths-Harris \cite{carlson1983infinitesimal} it is well-known that $V_\delta$ is not a properly contained analytic subvariety of $(U,f_0)$, if and only if, $\delta$ is a multiple of the polarization, i.e. if its primitive part vanishes.
For non-trivial primitive Hodge cycles $\delta\in H^{\frac{n}{2},\frac{n}{2}}(X,\Q)_\prim$, it is a classical conjecture (conjectured for $n=2$ in \cite{carlson1983infinitesimal} and proved for that case by Green \cite{green1988} and Voisin \cite{voisin1988}) that the minimal codimension of $V_\delta$ is attained precisely when $\delta$ is the primitive part of a multiple of the class of a linear subvariety of $X$. In other words, the \textit{linear cycles conjecture} claims that minimal codimension Hodge loci are precisely the Hodge loci of linear cycles. This conjecture is widely open for $n\ge 4$, but asymptotically it has been established for $d\gg n$ by Otwinowska \cite{Otw02}. In the case $n=2$, the lower bound for the codimension follows from a stronger bound by Green \cite{green1984koszul}, who proved that the Zariski tangent space $T_{f_0}V_\delta$ has codimension bounded by the codimension of the Hodge locus of a linear cycle. We say that a non-trivial primitive Hodge cycle $\delta$ is a \textit{fake linear cycle} if it weakly violates the linear cycles conjecture, in the sense that its Zariski tangent space has codimension equal to the codimension of the Hodge locus of a linear cycle, but $\delta$ is not equal to the primitive part of a multiple of a linear subvariety of $X$. Our main result is the following.

\begin{theorem}
\label{thm3}
For any degree $d$ and even dimension $n$ such that $d\ge 2+\frac{4}{n}$, there are infinitely many smooth degree $d$ hypersurfaces $X$ of dimension $n$ in $\P^{n+1}$ containing infinitely many fake linear cycles in $\P(H^{\frac{n}{2},\frac{n}{2}}(X,\Q)_\prim)$.
\end{theorem}

The notion of fake linear cycles was introduced by the authors in \cite{DuqueVillaflor}, where those cycles were studied in Fermat varieties of degree $d=3,4,6$ (for all other degrees, Fermat varieties do not contain fake linear cycles by \cite{villaflor2022small} and \cite{GMCD-NL}). Our construction of fake linear cycles in hypersurfaces different from Fermat ones is based in the classical \text{join} construction for algebraic cycles.

Let us recall the join construction. Take a decomposition of $\P^{n+1}=\P(V)$ by $V=V_1\oplus V_2$, hence $\P(V_1)\cap \P(V_2)=\varnothing$. Via this decomposition we obtain a natural identification $$H^0(\mathcal{O}_{\P(V)}(1))=V^\vee= V_1^\vee\oplus V_2^\vee=H^0(\mathcal{O}_{\P(V_1)}(1))\oplus H^0(\mathcal{O}_{\P(V_2)}(1))$$
inducing the following decomposition at the level of polynomial algebras
$$
S(\P(V))=\text{Sym}(V^\vee)= \text{Sym}(V_1^\vee)\otimes\text{Sym}(V_2^\vee)=S(\P(V_1))\otimes S(\P(V_2))
$$
in other words
$$
H^0(\mathcal{O}_{\P(V)}(d))= \bigoplus_{i+j=d}H^0(\mathcal{O}_{\P(V_1)}(i))\otimes H^0(\mathcal{O}_{\P(V_2)}(j)).
$$
Let $X=\{f=0\}\subseteq\P^{n+1}$ a smooth degree $d$ hypersurface, we say $f$ is \textit{decomposable} if it lies in the image of 
$$
H^0(\mathcal{O}_{\P(V_1)}(d))\oplus H^0(\mathcal{O}_{\P(V_2)}(d))\hookrightarrow H^0(\mathcal{O}_{\P(V)}(d))
$$
for some decomposition $V=V_1\oplus V_2$, in other words, if $f=g+h$ for $g\in H^0(\mathcal{O}_{\P(V_1)}(d))$ and $h\in H^0(\mathcal{O}_{\P(V_2)}(d)).$ Assume further that $V_1$ and $V_2$ are even dimensional, say $\dim \P(V_1)=k+1$ and $\dim \P(V_2)=n-k-1$ (thus $k$ and $n$ are even), and that the hypersurfaces $X_1:=\{g=0\}\subseteq\P(V_1)$ and $X_2:=\{h=0\}\subseteq\P(V_2)$ are smooth. Given two half dimensional algebraic subvarieties $Z_1\subseteq X_1$ and  $Z_2\subseteq X_2$, their \textit{join} $J(Z_1,Z_2)\subseteq X$ is the union of all lines connecting one point of $Z_1$ with one point of $Z_2$ inside $\P(V)$. In other words, in terms of their homogeneous coordinate rings
$$
S(J(Z_1,Z_2))= S(Z_1)\otimes S(Z_2).
$$
The above definition is compatible with rational equivalence and so, it can be extended bilinearly to a map
$$
J: \CH^\frac{k}{2}(X_1)\otimes \CH^{\frac{n-k-2}{2}}(X_2)\rightarrow \CH^\frac{n}{2}(X).
$$
A natural question is to ask what is the relation between the cycle classes of $Z_1$, $Z_2$ and $J(Z_1,Z_2)$. We establish the relation between the primitive parts of each class in terms of Griffiths' basis. Recall that by Griffiths' theorem \cite[Corollary 6.12]{voisin2003hodge} we can always write the primitive part of the cycle class of an algebraic cycle $Z\in\CH^\frac{n}{2}(X)$ inside a smooth degree $d$ hypersurface $X=\{f=0\}\subseteq\P^{n+1}$ of even dimension $n$ as the $(\frac{n}{2},\frac{n}{2})$ degree (in the Hodge decomposition) projection of a residue form
\begin{equation}
[Z]_\prim=\frac{(-1)^{\frac{n}{2}+1}\frac{n}{2}!}{d}\res\left(\frac{P_Z\Omega}{f^{\frac{n}{2}+1}}\right)^{\frac{n}{2},\frac{n}{2}}
\end{equation}
where $P_Z\in \C[x_0,\ldots,x_{n+1}]_{(d-2)(\frac{n}{2}+1)}$ is some polynomial and $H^0(\Omega_{\P^{n+1}}^{n+1}(n+2))=\C\cdot\Omega$. Furthermore, the $(\frac{n}{2},\frac{n}{2})$ projection of the corresponding residue form is uniquely determined by the class of $P_Z$ in the Jacobian ring $R^f:=\C[x_0,\ldots,x_{n+1}]/J^f$, where $J^f:=\langle \frac{\partial f}{\partial x_0},\ldots,\frac{\partial f}{\partial x_{n+1}}\rangle$ is the Jacobian ideal of $f$. In accordance with the main result of \cite{villaflor2021periods} we say $P_Z\in R^f_{(d-2)(\frac{n}{2}+1)}$ is the \textit{polynomial associated} to the algebraic cycle $Z$. Using this notation we can state our second main result.

\begin{theorem}
\label{thm2}
Let $Z_1\in \CH^\frac{k}{2}(X_1)$ and $Z_2\in \CH^\frac{n-k-2}
{2}(X_2)$. Then the join $J(Z_1,Z_2)\in\CH^\frac{n}{2}(X)$ satisfies
\begin{equation}
\label{eqPjoin}
P_{J(Z_1,Z_2)}=P_{Z_1}\cdot P_{Z_2}  \hspace{0.2cm} \in R^f.  
\end{equation}
Moreover, it is possible to detect whether a primitive Hodge class $\delta\in H^{\frac{n}{2},\frac{n}{2}}(X,\Q)_\prim$ is a join of two algebraic cycles $Z_1$ and $Z_2$ in terms of their associated Artinian Gorenstein algebras $R^{g,[Z_1]}$, $R^{h,[Z_2]}$ and $R^{f,\delta}$ (see \cref{defAG}) as follows
\begin{equation}
\label{eqjoinAGalg}
R^{f,\delta}= R^{g,[Z_1]}\otimes R^{h,[Z_2]} \ \ \Longleftrightarrow \ \ \delta=c\cdot[J(Z_1,Z_2)]_\prim 
\ \ \text{ for some }c\in\Q^\times.
\end{equation}
\end{theorem}

The above result can be restated as a relation purely in terms of periods (see \cref{thm1}), this relation can be deduced as an application of a theorem of Sebastiani and Thom \cite{sebastiani1971resultat} which states that the monodromy of $g(x)+h(y):\C^{n+2}\rightarrow\C$ splits as a tensor product of the monodromies of $g:\C^{k+2}\rightarrow \C$ and $h:\C^{n-k}\rightarrow \C$. In order to obtain the relation one has to identify the monodromy invariant part of the cohomology of the affine smooth fiber with the primitive cohomology of the projective hypersurface, keeping track of the isomorphisms in homology and the compatibility with the Griffiths' bases. We remark that Sebastiani and Thom theorem relies in analytical methods. For our context, we provide an alternative algebraic proof which relies in a toric birational modification of the ambient space reducing the computation to a smooth hypersurface of a projective simplicial toric variety, and then uses tools recently developed by the second author in \cite{villaflor2023toric} to describe residue forms along hypersurfaces in toric ambient spaces. 

In the authors previous work on fake linear cycles in Fermat varieties,
the totally decomposed structure of the cohomology class of fake linear cycles (see \cite[Theorem 1.1]{DuqueVillaflor}) together with \cref{thm2} suggested that fake linear cycles can be obtained as joins of $0$-dimensional fake linear cycles inside hypersurfaces of $\P^1$. Nevertheless, the notion of fake linear cycle previously introduced (defined in terms of the codimension of the tangent space of the Hodge locus) does not make sense in dimension $0$. After finding the tensor decomposition structure described in \cref{thm2} it became apparent that the appropriate notion of fake version of any $\frac{n}{2}$-dimensional algebraic subvariety of any $n$-dimensional smooth hypersurface of $\P^{n+1}$ for $n\ge 0$ (see \cref{deffake}) should depend on the Hilbert function associated to a Hodge cycle (see \cref{defHF}) and not only of its value at $d$ (which is the one related to the Zariski tangent space of the Hodge locus). Using this new notion and the join construction we prove \cref{thm3} by constructing fake linear cycles as joins of $0$-dimensional fake linear cycles inside hypersurfaces of $\P^1$.

Note that by Otwinowska's result \cite{Otw02}, for $d\gg n$ the Hodge loci of all fake linear cycles must have codimension strictly bigger than ${\frac{n}{2}+d \choose d}-(\frac{n}{2}+1)^2$, hence each of them is either non-reduced or reduced but singular at $X$. This schematic non-smoothness can be detected using the quadratic fundamental form introduced by Maclean (see \cite{mclean2005}), which must vanish when the Hodge locus is smooth. We relate the quadratic fundamental form of the Hodge loci $V_{[Z_1]}$, $V_{[Z_2]}$ and $V_{[J(Z_1,Z_2)]}$ in \cref{corqff}. Using \cref{corqff} we show that the Hodge loci of all fake linear cycles in \cref{thm3} are non-reduced or singular at $X$ for all $d\ge 2+\frac{6}{n}$ (see \cref{thm6}).

The article is organized as follows: in \cref{sec2} we recall some preliminaries about Artinian Gorenstein ideals and the quadratic fundamental form of a Hodge locus. \cref{sec3} is devoted to the proof of \cref{thm1} which is equivalent to \cref{thm2}. In \cref{sec4} we deduce \cref{thm2} and \cref{corqff} which allows us to relate the quadratic fundamental form of the Hodge loci of two algebraic cycles and their join. \cref{sec5} we illustrate in some concrete examples some uses of \cref{corqff} by computing the Artinian Gorenstein ideal and their associated quadratic fundamental form for all combinations of two linear cycles inside Fermat varieties which are not known to have reduced Hodge loci. In \cref{sec6} we introduce the Hilbert function associated to a Hodge cycle, and use this notion in \cref{sec7} to introduce the concept of fake algebraic cycles. This section also contains the proof of \cref{thm3}.

\bigskip

\noindent\textbf{Acknowledgements.} We are grateful for the comments and corrections by the anonymous referee which helped to improve significantly the presentation of the manuscript. We are also grateful to Hossein Movasati and Remke Kloosterman for several stimulating conversations, suggestions and corrections about this work. The main part of this work was done during our visit to IMPA in the context of the ``GADEPs focused conference III: Noether-Lefschetz and Hodge loci", we want to thank to the organizers and the institution for the invitation, support and hospitality. The final preparation of this article was done during our visit to the University of Padova (UniPD). We want to thank professor Remke Kloosterman and the math department for the invitation, hospitality and support. The first author was partially supported by the Fondecyt ANID postdoctoral grant 3220631. The second author was partially supported by the Fondecyt ANID postdoctoral grant 3210020, Fondecyt ANID regular grant 1240101 and Fondecyt ANID initiation grant 11251404.

\section{Preliminaries}
\label{sec2}
\subsection{Artinian Gorenstein ideal associated to a Hodge cycle}

For the sake of completeness we will briefly recall some known facts about Artinian Gorenstein ideals associated to Hodge cycles in smooth hypersurfaces of the projective space. For a more complete exposition see \cite{villaflor2022small}.

\begin{definition}
	A graded $\C$-algebra $R$ is \textit{Artinian Gorenstein} if there exist $\sigma\in \N$ such that 
	\begin{itemize}
		\item[(i)] $R_e=0\text{ for all }e>\sigma$,
		\item[(ii)] $\dim_\C \ R_\sigma=1$,
		\item[(iii)] $\text{the multiplication map }R_i\times R_{\sigma-i}\rightarrow R_\sigma\text{ is a perfect pairing for all }i=0,\ldots,\sigma.$
	\end{itemize}
	The number $\sigma=:\text{soc}(R)$ is the \textit{socle of $R$}. We say that an ideal $I\subseteq\C[x_0,\ldots,x_{n+1}]$ is \textit{Artinian Gorenstein of socle} $\sigma=:\text{soc}(I)$ if the quotient ring $R=\C[x_0,\ldots,x_{n+1}]/I$ is Artinian Gorenstein of socle $\sigma$.
\end{definition}

The definition of the following ideal appeared first in the work of Voisin \cite{voisin89} for surfaces, and later in the work of Otwinowska \cite{Otwinowska2003} for higher dimensional varieties.

\begin{definition}
\label{defAG}
	Let $X=\{f=0\}\subseteq\P^{n+1}$ be a smooth degree $d$ hypersurface of even dimension $n$, and $\lambda\in H^{\frac{n}{2},\frac{n}{2}}(X,\Z)$ be a non-trivial Hodge cycle. Consider $J^f:=\langle \frac{\partial f}{\partial x_0},\ldots,\frac{\partial f}{\partial x_{n+1}}\rangle$ to be the Jacobian ideal, we define the \textit{Artinian Gorenstein ideal associated to $\lambda$} as
	\begin{equation}
		\label{ideal}    
		J^{f,\lambda}:=(J^f:P_\lambda),
	\end{equation}
	where $P_\lambda\in \C[x_0,\ldots,x_{n+1}]_{(d-2)(\frac{n}{2}+1)}$ is such that $\lambda_\prim=\res\left(\frac{P_\lambda\Omega}{f^{\frac{n}{2}+1}}\right)^{\frac{n}{2},\frac{n}{2}}$. This ideal is Artinian Gorenstein of $\text{soc}(J^{f,\lambda})=(d-2)(\frac{n}{2}+1)=\frac{1}{2}\text{soc}(J^f)$. We denote by $R^{f,\lambda}:=\C[x_0,\ldots,x_{n+1}]/J^{f,\lambda}$ its corresponding Artinian Gorenstein algebra.
\end{definition}

\begin{remark}
\label{rmkeqidealAG}
The importance of this ideal is that it determines, up to a rational multiple, the primitive part of the cycle class, and its Hodge locus, that is (see for instance \cite[Corollary 2.3, Remark 2.3]{villaflor2022small})

$$
J^{f,\lambda_1}=J^{f,\lambda_2} \ \ \Longleftrightarrow \ \ \exists c\in\Q^\times: (\lambda_1-c\cdot\lambda_2)_\prim=0 \ \ \Longleftrightarrow \ \ V_{\lambda_1}=V_{\lambda_2},
$$
where $V_{\lambda}$ denotes the Hodge locus associated to the class $\lambda$, as defined in \eqref{eq:localhodge}. This ideal also encodes the information of the first-order approximation of the Hodge loci in a simple way. More precisely, let $U\subseteq\C[x_0,\ldots,x_{n+1}]_d$ be the parameter space of smooth degree $d$ hypersurfaces of $\P^{n+1}$, of even dimension $n$. For $f\in U$, let $X_f=\{f=0\}\subseteq \P^{n+1}$ be the corresponding hypersurface. For every Hodge cycle $\lambda\in H^{\frac{n}{2},\frac{n}{2}}(X_f,\Z)$, we can compute the Zariski tangent space of its associated Hodge locus $V_\lambda$ as
\begin{equation}
\label{eq:2301}
T_fV_\lambda=J^{f,\lambda}_d.
\end{equation}
Where we have identified $T_fU\simeq \C[x_0,\ldots,x_{n+1}]_d$.  
\end{remark}

\subsection{Quadratic fundamental form}\label{subsecQFF}
In this section we will explore the second order invariant of the IVHS associated to the Hodge locus $V_{[Z]}$ described by Maclean \cite{mclean2005}.  This invariant allows us to derive geometric information about the Hodge locus, namely the Hodge locus is either singular or non-reduced. For this type of application see \cite{DuqueVillaflor}.

The quadratic fundamental form was described in the context of surfaces for the classical Noether-Lefschetz loci by Maclean \cite{mclean2005}. However in higher dimensions it also gives a partial description of the quadratic fundamental form.

\begin{definition}
	Let $M$ be a smooth $m$-dimensional analytic scheme, $V$ a vector bundle on $M$ and $\sigma$ a section of $V$. Let $W$ be the zero locus of $\sigma$ and let $x\in W$. The \textit{quadratic fundamental form of $\sigma$ at $x$} is
	$$
	q_{\sigma,x}:T_xW\otimes T_xW\rightarrow V_x/\text{Im}(d\sigma_x)
	$$
	given in local coordinates $(z_1,\ldots,z_m)$ around $x$ by
	$$
	q_{\sigma,x}\left(\sum_{i=1}^m\alpha_i\frac{\partial}{\partial z_i},\sum_{j=1}^m\beta_j\frac{\partial}{\partial z_j}\right)=\sum_{i=1}^m\alpha_i\frac{\partial}{\partial z_i}\left(\sum_{j=1}^m\beta_j\frac{\partial}{\partial z_j}(\sigma)\right).
	$$
\end{definition}

\begin{remark}
The quadratic fundamental form detects the second order approximation to $W$ at $x$. In particular if $W$ is smooth at $x$, then $q_{\sigma,x}$ vanishes.
\end{remark}

In our context we will take $M=(U,0)$, $V=\bigoplus_{p=0}^{\frac{n}{2}-1}\mathscr{F}^{p}/\mathscr{F}^{p+1}$ and $x=0$. Here, $U\subseteq H^0(\mathcal{O}_{\P^{n+1}}(d))$ is the parameter space of smooth degree $d$ hypersurfaces of $\P^{n+1}$, $\pi: \mathcal{X}\rightarrow U$ is the corresponding family, $\mathscr{F}^p=R^n\pi_*\Omega_{\mathcal{X}/U}^{\bullet\ge p}$, and $0\in U$ corresponds to the Fermat variety $X_d^n$. Although the next result we will use applies in a neighborhood of any point in $U$ — that is, for any smooth hypersurface in the universal family — we will restrict ourselves to a neighborhood of the Fermat variety for our computations. In order to construct a section $\sigma$ of $V$ around $x$, let $\lambda\in H^{\frac{n}{2},\frac{n}{2}}(X^n_d)_\prim\cap H^n(X^n_d,\Z)$ be a Hodge cycle, and consider $\overline{\lambda}$ its induced flat section in $\mathscr{F}^0/\mathscr{F}^\frac{n}{2}$. If we fix a holomorphic splitting $\mathscr{F}^0/\mathscr{F}^\frac{n}{2}\simeq V$ and we take $\sigma$ as the image of $\overline{\lambda}$ under this splitting, then $W=V_\lambda$. In this context we can identify $T_xW=J^{f,\lambda}_d$ \eqref{eq:2301}, $V_x=\bigoplus_{q=\frac{n}{2}+1}^n R^f_{d(q+1)-n-2}$ and $d\sigma_x=\cdot P_\lambda$. The computation of the degree $d+(d-2)(\frac{n}{2}+1)$ piece of $q=q_{\sigma,x}$ under these identifications was done by Maclean \cite[Theorem 7]{mclean2005} as follows.

\begin{theorem}[Maclean]\label{thmMaclean}
	The degree $r:=d+(d-2)(\frac{n}{2}+1)$ piece of the fundamental quadratic form is $q|_{\text{Sym}^2(J^{f,\lambda}_d)}$ where
	$$
	q: \text{Sym}^2(J^{f,\lambda})\rightarrow R^f/\langle P_\lambda\rangle
	$$
  is the bilinear form given by
		\begin{equation}
		\label{eqQFF}q(G,H)=\sum_{i=0}^{n+1}\left(H\frac{\partial Q_i}{\partial x_i}-R_i\frac{\partial G}{\partial x_i}\right)
		\end{equation}
	where 
	$$
	G\cdot P_\lambda=\sum_{i=0}^{n+1}Q_i\frac{\partial f}{\partial x_i} \ \ \ \text{ and } \ \ \ H\cdot P_\lambda=\sum_{i=0}^{n+1}R_i\frac{\partial f}{\partial x_i}.
	$$
\end{theorem}

\begin{remark}
\label{rmkqffjac}
In particular $q(\cdot,H)=0$ for any $H\in J^f$.
\end{remark}





\section{Periods of join of algebraic cycles}
\label{sec3}
In this section we compute the periods of joins of algebraic cycles. Then we use this information to relate the cycle class and Artinian Gorenstein ideals of them. Let us recall the context we are working in. 

\bigskip
We start with an odd-dimensional projective space $\P^{n+1}$ (i.e. $n$ even), and two odd-dimensional linear subspaces $\P^{k+1}, \P^{n-k-1} \subseteq \P^{n+1}$ such that $\P^{k+1} \cap \P^{n-k-1} = \varnothing$. Consider a smooth degree $d$ hypersurface $X = \{f=0\} \subseteq \P^{n+1}$, where the defining equation $f$ is \textit{decomposable}, that is, it can be written as $f = g + h$, with $g \in H^0(\mathcal{O}_{\P^{k+1}}(d))$ and $h \in H^0(\mathcal{O}_{\P^{n-k-1}}(d))$. Assume further that the degree $d$ hypersurfaces $X_1 := \{g = 0\} \subseteq \P^{k+1}$ and $X_2 := \{h = 0\} \subseteq \P^{n-k-1}$ are both smooth. Each of these hypersurfaces contains a half-dimensional algebraic cycle, denoted $Z_1 \in \CH^{\frac{k}{2}}(X_1)$ and $Z_2 \in \CH^{\frac{n-k-2}{2}}(X_2)$, respectively. We then consider their join, $J(Z_1, Z_2) \in \CH^{\frac{n}{2}}(X)$.

\begin{theorem}
\label{thm1}
For any homogeneous polynomials $P(x)\in \C[x]$ and $Q(y)\in \C[y]$ such that $\deg(P(x)\cdot Q(y))=(d-2)(\frac{n}{2}+1)$ we have
\begin{equation}
\label{eqperiodsjoin}
\frac{\frac{n}{2}!}{\frac{k}{2}!\cdot \frac{n-k-2}{2}!}\int_{J(Z_1,Z_2)}\res\left(\frac{P(x)Q(y)\Omega}{(g(x)+h(y))^{\frac{n}{2}+1}}\right)=
-2\pi i\cdot \int_{Z_1}\res\left(\frac{P\Omega'}{g^{\frac{k}{2}+1}}\right)\cdot \int_{Z_2}\res\left(\frac{Q\Omega''}{h^{\frac{n-k}{2}}}\right)
\end{equation}
if $\deg(P)=(d-2)(\frac{k}{2}+1)$ and $\deg(Q)=(d-2)(\frac{n-k}{2})$, and is zero otherwise. Where $\Omega$, $\Omega'$, and $\Omega''$ are the standard top forms of $\P^{n+1}$, $\P^{k+1}$ and $\P^{n-k-1}$ respectively.
\end{theorem}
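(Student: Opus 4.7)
The strategy is to parametrize $J(Z_1,Z_2)$ via the birational morphism
\[\Sigma:\P^1\times\P^{k+1}\times\P^{n-k-1}\longrightarrow\P^{n+1},\qquad([s:t],[x],[y])\longmapsto[sx:ty],\]
which resolves the rational projection $\P^{n+1}\dashrightarrow\P^{k+1}\times\P^{n-k-1}$ (whose indeterminacy locus is the disjoint union $\P^{k+1}\sqcup\P^{n-k-1}\subseteq\P^{n+1}$). The proper transform of $J(Z_1,Z_2)$ under $\Sigma$ is $\P^1\times Z_1\times Z_2$, and the preimage of $X$ is the divisor $\{s^df(x)+t^dg(y)=0\}$. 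An equivariance argument using the three $\C^\times$-scalings of the factors together with the two diagonal actions $(s,x)\mapsto(\lambda s,\lambda^{-1}x)$ and $(t,y)\mapsto(\mu t,\mu^{-1}y)$ (both of which commute with $\Sigma$) pins down the pullback of the canonical form up to a universal constant:
\[\Sigma^*\Omega=c\cdot s^{k+1}t^{n-k-1}(s\,dt-t\,ds)\wedge\Omega'(x)\wedge\Omega''(y).\]

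I would then express the period as a Leray tube integral $\int_{J(Z_1,Z_2)}\res(\omega)=\frac{1}{2\pi i}\int_{T(J)}\omega$ and pull back everything via $\Sigma$, applying Fubini to decompose the integral over $\P^1\times Z_1\times Z_2$ into iterated integrations. The heart of the calculation is the $\P^1$-integration: for generic $(x,y)$ on a small tube around $Z_1\times Z_2$, the polynomial $s^df(x)+t^dg(y)$ has $d$ simple zeros in $\P^1$, and the one-variable Cauchy residue at these zeros (in the limit where they coalesce as $(x,y)\to Z_1\times Z_2$) produces the factor $-2\pi i$ together with the combinatorial coefficient $\frac{h!}{(k/2)!\,((n-k-2)/2)!}$. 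This coefficient arises as a multinomial distributing the $h$ Taylor derivatives coming from the $(h+1)$-fold pole among the $k/2$ derivatives needed by the $Z_1$-residue, the $(n-k-2)/2$ needed by the $Z_2$-residue, and the single derivative consumed by the Cauchy formula on $\P^1$. The remaining integration over $Z_1\times Z_2$ factors by Fubini into the product of periods on $X_1$ and $X_2$ that appears on the right-hand side of \eqref{eqperiodsjoin}.

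For the vanishing assertion when $\deg(P)\neq(d-2)(k/2+1)$, a bi-homogeneity argument under the two torus actions above shows that the $\P^1$-integration picks out only the unique bidegree $((d-2)(k/2+1),(d-2)(n-k)/2)$; contributions from other bidegrees correspond to exact one-forms on $\P^1$ and vanish by the residue theorem. The main obstacle is the Cauchy residue computation in the limit of coalescing roots, which requires a careful Taylor expansion of $(s^df+t^dg)^{-(h+1)}$ to order $h$ in a local parameter transverse to the root locus in order to extract the precise coefficient and sign. A secondary technical point is justifying the Fubini interchange and identifying the tube $T(J)\subseteq\P^{n+1}\setminus X$ with its pullback to the smooth blow-up, so that the iterated integral is well-posed.
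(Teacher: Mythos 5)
Your overall strategy --- resolve the projection away from $\P^{k+1}\sqcup\P^{n-k-1}$, identify the proper transform of $J(Z_1,Z_2)$ with a $\P^1$-fibration over $Z_1\times Z_2$, and extract the factor $2\pi i$ and the multinomial coefficient from the $\P^1$ direction --- is the same as the paper's, and your bi-homogeneity argument for the vanishing in unbalanced degrees matches the paper's observation that the relevant class $(t_0/t_1)^r\theta$ on $\P^1$ is exact for $r\neq 0$. However, there are two genuine gaps. First, the map $\Sigma([s:t],[x],[y])=[sx:ty]$ is not well defined on the product $\P^1\times\P^{k+1}\times\P^{n-k-1}$: the point $[sx:ty]\in\P^{n+1}$ is not invariant under the independent rescalings $x\mapsto\lambda x$, $y\mapsto\mu y$. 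The correct source is the $\P^1$-bundle $\P(\mathcal{O}(-1,0)\oplus\mathcal{O}(0,-1))$ over $\P^{k+1}\times\P^{n-k-1}$ (equivalently, the blow-up of $\P^{n+1}$ along the two linear subspaces), on which $s^df(x)+t^dg(y)$ cuts out a divisor as a section of a line bundle rather than as a bi-homogeneous polynomial on a naive product; this is precisely why the paper works instead with the rational map $\varphi(u,v,t)=(t_0v_0u:t_1u_0v)$ and blows up its indeterminacy locus to land in a simplicial toric variety with an explicit Cox ring. The defect is repairable, but as written your space, your divisor, and your ``$\P^1$-integration'' are not yet meaningful objects.

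Second, and more seriously, the central ``coalescing roots'' computation does not go through as described. For $(x,y)\in Z_1\times Z_2$ one has $f(x)=g(y)=0$, so the fiber polynomial $s^df(x)+t^dg(y)$ vanishes \emph{identically} on the $\P^1$ fiber: the entire fiber over $Z_1\times Z_2$ lies inside the preimage of $X$, and for nearby $(x,y)$ the $d$ roots are located by the ratio $-g(y)/f(x)$, which is indeterminate (not tending to a point) as $(x,y)\to Z_1\times Z_2$. So there is no configuration of $d$ simple roots degenerating to a single point from which a one-variable Cauchy residue and a Taylor expansion of $(s^df+t^dg)^{-(\frac{n}{2}+1)}$ could extract the coefficient; what you actually face is an excess-intersection situation in which the whole strict transform is contained in the resolved hypersurface. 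The paper sidesteps this analytic difficulty entirely: after the moving lemma it compares \emph{cohomology classes} restricted to the smooth pieces $W_0$ of the strict transform, computing the pullback of the residue form in \v{C}ech cohomology via the toric Carlson--Griffiths formula of \cite{villaflor2023toric} and matching it, cocycle by cocycle on an explicit common refinement, against $\pi^*(\pr_1^*\omega_\alpha\cup\pr_2^*\omega_\beta\cup\pr_3^*\theta)$. If you wish to keep a direct integral computation, you would at minimum need to replace the fiberwise Cauchy residue by a residue/Gysin argument on the resolved hypersurface $Y$ itself, which is essentially what the cohomological comparison accomplishes.
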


\bigskip

\begin{proof}
In order to avoid confusion let $u=(u_0:\cdots:u_{k+1})$ be the coordinates of $\P^{k+1}$, $v=(v_0:\cdots:v_{n-k-1})$ be the coordinates of $\P^{n-k-1}$ and $(x:y)=(x_0:\cdots:x_{k+1}:y_0:\cdots:y_{n-k-1})$ be the coordinates of $\P^{n+1}$. Since \eqref{eqperiodsjoin} is independent of the choice of coordinates for $\P^{k+1}$ and $\P^{n-k-1}$, we can assume by Bertini's theorem that $X_1\cap \{u_0=0\}$ and $X_2\cap \{v_0=0\}$ are smooth hyperplane sections. By the bilinearity of \eqref{eqperiodsjoin} we can reduce ourselves to the case of monomials $P(x)=x^\alpha$ and $Q(y)=y^\beta$.
Let us treat first the case where $\deg(x^\alpha)=(d-2)(\frac{k}{2}+1)$ and $\deg(y^\beta)=(d-2)(\frac{n-k}{2})$. Let us denote
$$
\omega_{\alpha\beta}:=\res\left(\frac{x^\alpha y^\beta\Omega}{(g(x)+h(y))^{\frac{n}{2}+1}}\right)^{\frac{n}{2},\frac{n}{2}}\in H^\frac{n}{2}(X,\Omega_X^\frac{n}{2}),
$$
$$
\omega_\alpha:=\res\left(\frac{u^\alpha\Omega'}{g(u)^{\frac{k}{2}+1}}\right)^{\frac{k}{2},\frac{k}{2}}\in H^\frac{k}{2}(X_1,\Omega_{X_1}^\frac{k}{2}),
$$
$$
\omega_\beta:=\res\left(\frac{v^\beta\Omega''}{h(v)^{\frac{n-k}{2}}}\right)^{\frac{n-k-2}{2},\frac{n-k-2}{2}}\in H^{\frac{n-k-2}{2}}(X_2,\Omega_{X_2}^{\frac{n-k-2}{2}}).
$$
Consider the birational map 
$$
\varphi:\P^{k+1}\times\P^{n-k-1}\times\P^1\dasharrow \P^{n+1}
$$
$$
\varphi(u,v,t)=(t_0v_0u:t_1u_0v)
$$
whose indeterminacy locus is given by $C_0\cup C_1\cup C_2$ for
$$
C_0=\{u_0=v_0=0\}, \hspace{5mm} C_1=\{u_0=t_0=0\}, \hspace{5mm} C_2=\{v_0=t_1=0\}.
$$
Let $\P_\Sigma$ be the projective simplicial toric variety obtained by successively blowing-up $C_0$, $C_1$ and $C_2$
\[\begin{tikzcd}[column sep=small]
\P_\Sigma \arrow{r}{\pi}  \arrow{rd}{\widetilde{\varphi}} 
  & \P^{k+1}\times\P^{n-k-1}\times\P^1 \arrow[dashed]{d}{\varphi} \\
    & \P^{n+1}
\end{tikzcd}.
\]
Let us denote their Cox rings by 
\begin{align*}
S(\P^{n+1})&=\C[x_0,\ldots,x_{k+1},y_0,\ldots,y_{n-k-1}],\\
S(\P^{k+1}\times\P^{n-k-1}\times\P^1)&=\C[u_0,\ldots,u_{k+1},v_0,\ldots,v_{n-k-1},t_0,t_1],\\
S(\P_\Sigma)&=\C[a_0,\ldots,a_{k+1},b_0,\ldots,b_{n-k-1},s_0,s_1,e_0,e_1,e_2],
\end{align*}
hence we have the identifications induced by $\varphi$ and $\pi$
\begin{align*}
x_0&=t_0u_0v_0, \ \ldots \ , \ \ \ \ x_{k+1}=t_0u_{k+1}v_0,\\
y_{0}&=t_1u_0v_0, \  \ldots \ , \ y_{n-k-1}=t_1u_0v_{n-k-1},
\end{align*}
\vspace{-1.1cm}
\begin{align*}
u_0&=a_0e_0e_1, \ u_1=a_1 , \ \ldots \ , \ \ \ u_{k+1}=a_{k+1}, \\
v_0&=b_0e_0e_2, \ \ v_1=b_1, \ \ldots \ , v_{n-k-1}=b_{n-k-1},    
\end{align*}
\vspace{-0.7cm}
$$
t_0=s_0e_1, \  t_1=s_1e_2.
$$
In order to understand the fan of $\P_\Sigma$ let us write first the primitive generators of the rays corresponding to each variable. Let $M_1$, $M_2$ and $M_3$ be the character lattices of $\P^{k+1}$, $\P^{n-k-1}$ and $\P^1$ respectively. Let $N_i:=M_i^\vee$ be the dual lattice. Then, the primitive generators of the rays of $\Sigma$ belong to $N:=N_1\oplus N_2\oplus N_3$. Let us denote by $\{r^{(i)}_j\}_j$ the canonical basis of $N_i$, then the primitive generators of the rays of $\Sigma(1)$ correspond to  
$$
\rho_{a_i}=(r_i^{(1)},0,0), \ \ \rho_{a_0}=-\sum_{i=1}^{k+1}\rho_{a_i}, \ \ \rho_{b_j}=(0,r_j^{(2)},0), \ \ \rho_{b_0}=-\sum_{j=1}^{n-k-1}\rho_{b_j}, 
$$
$$
\rho_{s_1}=(0,0,r_1^{(3)}), \ \ \rho_{s_0}=-\rho_{s_1}, \ \ \rho_{e_0}=\rho_{a_0}+\rho_{b_0}, \ \ \rho_{e_1}=\rho_{a_0}+\rho_{s_0}, \ \ \rho_{e_2}=\rho_{b_0}+\rho_{s_1},    
$$
for $i=1,\ldots,k+1$ and $j=1,\ldots,n-k-1$. In order to describe the (maximal) cones of $\Sigma(n+1)$, we write the generators of its irrelevant ideal as follows
\begin{equation*}
    \begin{split}
        B(\Sigma)=&\Big\langle\big\{ a_0a_ib_0b_js_0e_1, \ a_0a_ib_js_0s_1e_1, \ a_0a_ib_js_1e_1e_2, \ a_ib_0b_js_0e_1e_2, \  a_0a_ib_0b_js_1e_2,\\
        & a_ib_0b_js_0s_1e_2, \ a_0b_0b_js_0e_0e_1, \ a_0b_js_0s_1e_0e_1, \ a_0b_js_1e_0e_1e_2, \ a_ib_0s_0e_0e_1e_2, \\ 
        & a_0a_ib_0s_1e_0e_2, \ a_ib_0s_0s_1e_0e_2, \ a_0b_0s_0e_0e_1e_2, \ a_0b_0s_1e_0e_1e_2\big\}_{\begin{smallmatrix} 1\le &i&\le& k+1 \\ 1\le &j&\le& n-k-1\end{smallmatrix}}\Big\rangle.
\end{split}
\end{equation*}
Let $Y\subseteq \P_\Sigma$ be the strict transform of $X\subseteq \P^{n+1}$ under the birational morphism $\widetilde{\varphi}$. In particular
$$
Y=\{f:=(s_0b_0)^dg(u)+(s_1a_0)^dh(v)=0\}\subseteq \P_\Sigma
$$
is a smooth hypersurface (here we use that $X_1\cap \{u_0=0\}$ and $X_2\cap \{v_0=0\}$ are smooth). 
Let $W\in \CH^\frac{n}{2}(Y)$ be the strict transform of $J(Z_1,Z_2)\in \CH^\frac{n}{2}(X)$. Since $\pi_*(W)=Z_1\times Z_2\times\P^1$, in order to obtain \eqref{eqperiodsjoin} it is enough to check that 
\begin{equation}
\label{eqperjoinW}
\frac{\frac{n}{2}!}{\frac{k}{2}!\cdot\frac{n-k-2}{2}!}\cdot\int_W\widetilde{\varphi}^*\omega_{\alpha\beta}=-\int_W\pi^*(\text{pr}_1^*\omega_\alpha\cup\text{pr}_2^*\omega_\beta\cup\text{pr}_3^*\theta)
\end{equation}
for $\theta\in H^1(\P^1,\Omega_{\P^1}^1)$ the polarization (whose period is $2\pi i$). Let 
$$
X_{1,2}:=\{g(x)=h(y)=0\}\subseteq X\subseteq \P^{n+1}
$$
which is a smooth complete intersection of bi-degree $(d,d)$, and $J(Z_1,Z_2)\in \CH^\frac{n}{2}(X_{1,2})$. Since the open sets $V_j:=\{x_j\frac{\partial g(x)}{\partial x_j}\neq 0\}$ and $V_\ell':=\{y_\ell \frac{\partial h(y)}{\partial y_\ell}\neq 0\}$ cover $X_{1,2}$ for $j=0,\ldots,k$ and $\ell=0,\ldots,n-k-2$, we can assume by the moving lemma that $J(Z_1,Z_2)$ is supported in a collection of smooth subvarieties of $X_{1,2}$ contained in $\bigcup_{j=0}^\frac{k}{2}U_j\cup\bigcup_{\ell=0}^\frac{n-k-2}{2}V_\ell$. Let us denote by $W_0\subseteq Y$ the strict transform of any of such subvarieties. Thus, in order to prove \eqref{eqperjoinW} it is enough to show that
\begin{equation}
\label{eqcohjoin}
\frac{\frac{n}{2}!}{\frac{k}{2}!\cdot\frac{n-k-2}{2}!}\cdot\widetilde{\varphi}^*\omega_{\alpha\beta}|_{W_0}=-\pi^*(\text{pr}_1^*\omega_\alpha\cup\text{pr}_2^*\omega_\beta\cup\text{pr}_3^*\theta)|_{W_0}
\end{equation}
in $H^n_\dR(W_0,\C)\simeq H^\frac{n}{2}(W_0,\Omega_{W_0}^\frac{n}{2})$. 
We can compute the left hand side of \eqref{eqcohjoin} using a toric version of a theorem due to Carlson and Griffiths \cite[Theorem 8.1]{villaflor2023toric} which computes the residue map in \v{C}ech cohomology relative to the Jacobian cover $\mathcal{U}=\{U_i\}_{i=0}^{n+6}$ of $Y$, where $U_i=\{f_i\neq0\}$ and $f_i$ are the partial derivatives of $f$ with respect to the homogeneous coordinates of $\P_\Sigma$. Let us denote by $\Omega'''$ the standard top form of $\P_\Sigma$. Since $\widetilde{\varphi}^*\Omega=-e_0e_1e_2u_0^{n-k}v_0^{k+2}t_0^{k+1}t_1^{n-k-1}\Omega'''$, we get
\begin{equation}
\label{eq:resalphabeta}
\widetilde{\varphi}^*\omega_{\alpha\beta}=\res\left(\frac{(t_0v_0)^{(d-2)(\frac{k}{2}+1)}(t_1u_0)^{(d-2)(\frac{n-k}{2})}u^\alpha v^\beta\widetilde{\varphi}^*\Omega}{(e_0e_1e_2)^{n+2}f^{\frac{n}{2}+1}}\right)^{\frac{n}{2},\frac{n}{2}}
\end{equation}
$$
=-\res\left(\frac{s_0^{d(\frac{k}{2}+1)-1}s_1^{d(\frac{n-k}{2})-1}a_0^{d(\frac{n-k}{2})}b_0^{d(\frac{k}{2}+1)}e_0u^\alpha v^\beta\Omega'''}{f^{\frac{n}{2}+1}}\right)^{\frac{n}{2},\frac{n}{2}}
$$
$$
=\frac{-1}{\frac{n}{2}!}\left\{\frac{s_0^{d(\frac{k}{2}+1)-1}s_1^{d(\frac{n-k}{2})-1}a_0^{d(\frac{n-k}{2})}b_0^{d(\frac{k}{2}+1)}e_0u^\alpha v^\beta\Omega'''_J}{f_J}\right\}_{|J|=\frac{n}{2}+1}\in H^\frac{n}{2}(\mathcal{U},\Omega_{Y}^\frac{n}{2}),
$$
where we are using the notation from \cite{villaflor2023toric}. On the other hand we have
$$
\pi^*\text{pr}_1^*\omega_\alpha|_{W_0}=\frac{1}{\frac{k}{2}!}\left\{\frac{u^\alpha\Omega'_K}{g_K}\right\}_{|K|=\frac{k}{2}+1}\in H^\frac{k}{2}(\pi^{-1}\text{pr}_1^{-1}\mathcal{U}_1,\Omega_{W_0}^\frac{k}{2}),
$$
$$
\pi^*\text{pr}_2^*\omega_\beta|_{W_0}=\frac{1}{\frac{n-k-2}{2}!}\left\{\frac{v^\beta\Omega''_L}{h_L}\right\}_{|L|=\frac{n-k}{2}}\in H^{\frac{n-k-2}{2}}(\pi^{-1}\text{pr}_2^{-1}\mathcal{U}_2,\Omega_{W_0}^{\frac{n-k-2}{2}}),
$$
$$
\pi^*\text{pr}_3^*\theta|_{W_0}=\frac{t_0dt_1-t_1dt_0}{t_0t_1}\in H^1(\pi^{-1}\text{pr}_3^{-1}\mathcal{U}_3,\Omega_{W_0}^1),
$$
where $\mathcal{U}_1$ and $\mathcal{U}_2$ are the Jacobian covers of $X_1$ and $X_2$ respectively, while $\mathcal{U}_3$ is the standard open cover of $\P^1$. When restricted to $W_0$, the coverings $\pi^{-1}\text{pr}_1^{-1}\mathcal{U}_1$, $\pi^{-1}\text{pr}_2^{-1}\mathcal{U}_2$ and $\pi^{-1}\text{pr}_3^{-1}\mathcal{U}_3$ admit a common refinement $\mathcal{V}=\{V_{(j,\ell,r)}\}_{(j,\ell,r)}=\{V_{(j,0,0)}\}_{j=0}^\frac{k}{2}\cup\{V_{(0,\ell,1)}\}_{\ell=0}^\frac{n-k-2}{2}$ where
$$
V_{(j,0,0)}=\{u_jg_j(u)v_0t_0\neq 0\}=\widetilde\varphi^{-1}V_j \ \ \ \text{ and } \ \ \ V_{(0,\ell,1)}=\{u_0v_\ell h_\ell(v)t_1\neq 0\}=\widetilde{\varphi}^{-1}V_\ell'.
$$
Hence
$$
\frac{k}{2}!\cdot\frac{n-k-2}{2}!\cdot(\pi^*\text{pr}_1^*\omega_\alpha|_{W_0}\cup\pi^*\text{pr}_2^*\omega_\beta|_{W_0}\cup\pi^*\text{pr}_3^*\theta|_{W_0})_{(j_1,\ell_1,r_1),\ldots,(j_{\frac{n}{2}+1},\ell_{\frac{n}{2}+1},r_{\frac{n}{2}+1})}=
$$
$$
(-1)^{\frac{nk}{4}+\frac{n}{2}+1}\cdot\frac{u^\alpha v^\beta\Omega'_{(j_1,\ldots,j_{\frac{k}{2}+1})}\wedge\Omega''_{(\ell_{\frac{k}{2}+1},\ldots,\ell_{\frac{n}{2}})}\wedge(t_{r_\frac{n}{2}}dt_{r_{\frac{n}{2}+1}}-t_{r_{\frac{n}{2}+1}}dt_{r_\frac{n}{2}})}{g_{(j_1,\ldots,j_{\frac{k}{2}+1})}(u)h_{(\ell_{\frac{k}{2}+1},\ldots,\ell_{\frac{n}{2}})}(v)t_{r_\frac{n}{2}}t_{r_{\frac{n}{2}+1}}}
$$
in \v{C}ech cohomology relative to the cover $\mathcal{V}$. We remark that the above formula for the cup product in \v{C}ech cohomology is well defined only for ordered tuples of indexes (the other tuples are defined by skew-symmetric extension), and the cohomology class is independent of this choice of the ordering. We will order the tuples $(j,\ell,r)$ lexicographically but with decreasing order in each entry. Then for an ordered set of tuples 
\begin{equation}
\label{eq:cech}
\frac{k}{2}!\cdot\frac{n-k-2}{2}!\cdot(\pi^*(\text{pr}_1^*\omega_\alpha\cup\text{pr}_2^*\omega_\beta\cup\text{pr}_3^*\theta)|_{W_0})_{(j_1,\ell_1,r_1),\ldots,(j_{\frac{n}{2}+1},\ell_{\frac{n}{2}+1},r_{\frac{n}{2}+1})}=
\end{equation}
$$
(-1)^{\frac{nk}{4}+\frac{n}{2}+1}\cdot\frac{u^\alpha v^\beta\pi^*(\Omega'_{(\frac{k}{2},\ldots,0)})\wedge\pi^*(\Omega''_{(\frac{n-k-2}{2},\ldots,0)})\wedge(s_1e_2d(s_0e_1)-s_0e_1d(s_1e_2))}{g_{(0,\ldots,\frac{k}{2})}(u)h_{(0,\ldots,\frac{n-k-2}{2})}(v)s_0s_1e_1e_2}
$$
if $(j_1,\ldots,j_{\frac{k}{2}+1})=(\frac{k}{2},\ldots,0)$, $(\ell_{\frac{k}{2}+1},\ldots,\ell_{\frac{n}{2}})=(\frac{n-k-2}{2},\ldots,0)$ and $(r_{\frac{n}{2}},r_{\frac{n}{2}+1})=(1,0)$, and is zero otherwise. Now it is routine to verify \eqref{eqcohjoin}  in the open covering $\mathcal{V}$ (which is a sub-covering of $\mathcal{U}|_{W_0}$) using \eqref{eq:resalphabeta} and \eqref{eq:cech}.

For the case where $\deg(x^\alpha)\neq (d-2)(\frac{k}{2}+1)$, let $r:=\deg(x^\alpha)-(d-2)(\frac{k}{2}+1)$. By the same argument as above, it is enough for us to show that 
$$
\widetilde{\varphi}^*\omega_{\alpha\beta}|_{W_0}=0\in H^\frac{n}{2}(\mathcal{V},\Omega_{W_0}^\frac{n}{2}).
$$
Using \eqref{eq:resalphabeta} in this covering we can write
$$
\widetilde{\varphi}^*\omega_{\alpha\beta}|_{W_0}=\pi^*(\pr_{12}^*\eta\cup \pr_3^*\widetilde{\theta})|_{W_0},
$$
for $\eta\in H^{\frac{n}{2}-1}(\pr_{12}(\mathcal{V}),\Omega_{X_1\times X_2}^{\frac{n}{2}-1}|_{\pr_{12}(W_0)})$ given by
$$
\eta_{(j_1,\ell_1,r_1),\ldots,(j_{\frac{n}{2}},\ell_{\frac{n}{2}},r_\frac{n}{2})}=\left(\frac{v_0}{u_0}\right)^r\cdot\frac{u^\alpha v^\beta \Omega'_{(j_1,\ldots,j_{\frac{k}{2}+1})}\wedge \Omega''_{(\ell_{\frac{k}{2}+1},\ldots,\ell_\frac{n}{2})}}{g_{(j_1,\ldots,j_{\frac{k}{2}+1})}(u)h_{(\ell_{\frac{k}{2}+1},\ldots,\ell_\frac{n}{2})}(v)},
$$
where each open set of the covering $\pr_{12}(\mathcal{V})=\{T_{(j,\ell,r)}\}=\{T_{(j,0,0)}\}_{j=0}^\frac{k}{2}\cup\{T_{(0,\ell,1)}\}_{\ell=0}^\frac{n-k-2}{2}$ is of the form $T_{(j,0,0)}=\{u_jv_0g_j(u)\neq 0\}$ or $T_{(0,\ell,1)}=\{u_0v_\ell h_\ell(v)\neq 0\}$. 
And where
$$
\widetilde{\theta}=\left(\frac{t_0}{t_1}\right)^r\theta\in H^1(\mathcal{U}_3,\Omega_{\P^1}^1).
$$
The result follows since $\widetilde{\theta}=0$ for $r\neq0$.
\end{proof}

\section{Cycle class and Hodge loci of join algebraic cycles}
\label{sec4}
In this section we translate the periods relation of \cref{thm1} into relations of the corresponding cycle classes and Hodge loci in the context of join algebraic cycles. The first relation is the content of \cref{thm2} which we prove in the following.

\bigskip

\noindent\textbf{Proof of \cref{thm2}} Applying \cite[Proposition 6.1]{villaflor2021periods} to \cref{thm1} we obtain
\begin{equation}
c=\frac{\frac{n}{2}!\cdot d}{\frac{k}{2}!\cdot \frac{n-k-2}{2}!}c_1c_2
\end{equation}
where $c,c_1,c_2\in\C^\times$ are the unique complex numbers such that
\begin{align*}
    \frac{(-1)^{\frac{n}{2}+1}\frac{n}{2}!}{d}PQP_{J(Z_1,Z_2)}&\equiv c\cdot \det(\Hess(g+h)) \ \ \text{ (mod }J^{g+h}) \\
    \frac{(-1)^{\frac{k}{2}+1}\frac{k}{2}!}{d}PP_{Z_1}&\equiv c_1\cdot \det(\Hess(g)) \ \ \text{ (mod }J^{g}) \\
    \frac{(-1)^{\frac{n-k}{2}}\frac{n-k-2}{2}!}{d}QP_{Z_2}&\equiv c_2\cdot \det(\Hess(h)) \ \ \text{ (mod }J^{h})
\end{align*}
for $P\in \C[x]_{(d-2)(\frac{k}{2}+1)}$ and $Q\in \C[y]_{(d-2)(\frac{n-k}{2})}$. Since $R^{g+h}=R^g\otimes R^h$ and $\det(\Hess(g+h))=\det(\Hess(g))\cdot \det(\Hess(h))$ it follows that
$$
PQP_{J(Z_1,Z_2)}\equiv PQP_{Z_1}P_{Z_2} \ \ \text{ (mod }J^{g+h})
$$
for all $P\in \C[x]_{(d-2)(\frac{k}{2}+1)}$ and $Q\in \C[y]_{(d-2)(\frac{n-k}{2})}$. In particular, 
\begin{equation}
\label{eqPjoin2}
x^\alpha y^\beta (P_{J(Z_1,Z_2)}-P_{Z_1}P_{Z_2})=0 \in R^{g+h}    
\end{equation}
for all monomials such that $\deg(x^\alpha)=(d-2)(\frac{k}{2}+1)$ and $\deg(y^\beta)=(d-2)(\frac{n-k}{2})$. On the other hand if $\deg(x^\alpha)>(d-2)(\frac{k}{2}+1)$ then $x^\alpha P_{Z_1}=0\in R^{g+h}$ and similarly if $\deg(y^\beta)>(d-2)(\frac{n-k}{2})$ then $y^\beta P_{Z_2}=0\in R^{g+h}$. Hence, it follows from the second part of \cref{thm1} that \eqref{eqPjoin2} holds for any monomial of degree $\deg(x^\alpha y^\beta)=(d-2)(\frac{n}{2}+1)$. Since $R^{g+h}$ is Artinian Gorenstein of socle in degree $(d-2)(n+2)$ we obtain \eqref{eqPjoin}.

Now if an element $T\in R^{g+h}_{e}$ is zero in $R^{g+h,\delta}=R^{g,[Z_1]}\otimes R^{h,[Z_2]}$ then 
$$
T=\sum_{i=0}^eT_i(x)\cdot \check{T}_{e-i}(y)
$$
where $T_i(x)\in R^g_i$, $\check{T}_{e-i}(y)\in R^h_{e-i}$ and for each $i=0,\ldots,e$, we have $T_i\in (J^g:P_{Z_1})$ or $\check{T}_{e-i}\in (J^h:P_{Z_2})$. Hence such a $T$ satisfies that $T\cdot P_{Z_1}\cdot P_{Z_2}=0\in R^{g+h}$ and so 
$$
J^{g+h,\delta}\subseteq (J^{g+h},P_{Z_1}\cdot P_{Z_2})=J^{g+h,[J(Z_1,Z_2)]}.
$$
Since both are Artinian Gorenstein ideals of socle in degree $(d-2)(\frac{n}{2}+1)$, they are equal and \eqref{eqjoinAGalg} follows from \cref{rmkeqidealAG}.$\hfill \blacksquare$

\bigskip

In \cref{subsecQFF} we recalled the quadratic fundamental form, which is a second order invariant of the Hodge loci that vanishes when the corresponding Hodge locus is smooth and reduced. As a consequence of \cref{thm2} we can relate the quadratic fundamental form of $V_{[J(Z_1,Z_2)]}$ with those of $V_{[Z_1]}$ and $V_{[Z_2]}$ as follows.

\begin{theorem}
\label{corqff}
In the same context of \cref{thm2} let us denote by
$$
q:\text{Sym}^2(J^{g+h,[J(Z_1,Z_2)]})\rightarrow R^{g+h}/\langle P_{Z_1}\cdot P_{Z_2}\rangle,
$$
$$
q_1:\text{Sym}^2(J^{g,[Z_1]})\rightarrow R^g/\langle P_{Z_1}\rangle,
$$
$$
q_2:\text{Sym}^2(J^{h,[Z_2]})\rightarrow R^h/\langle P_{Z_2}\rangle,
$$
the bilinear forms (introduced in \cref{thmMaclean}) associated to $J(Z_1,Z_2)$, $Z_1$ and $Z_2$ respectively. Consider 
$$
G=A_1(x,y)G_1(x)+A_2(x,y)G_2(y)\in J^{g+h,[J(Z_1,Z_2)]}
$$
$$
H=B_1(x,y)H_1(x)+B_2(x,y)H_2(y)\in J^{g+h,[J(Z_1,Z_2)]}$$
with $G_1,H_1\in J^{g,[Z_1]},\; G_2,H_2\in J^{h,[Z_2]}$. Then
\begin{equation}
\label{eqQFFjoin}
q(G,H)=A_1B_1P_{Z_2}q_1(G_1,H_1)+A_2B_2P_{Z_1}q_2(G_2,H_2).
\end{equation}
In consequence for any degree $e\ge 0$ we have the following:
\begin{itemize}
    \item[(i)] If $q_1$ and $q_2$ vanish in all degrees $\ell\le e$, then $q$ vanishes in degree $e$.
    \item[(ii)] If $q$ vanishes in degree $e$, then $q_1|_{\text{Sym}^2(J^{g,[Z_1]}_\ell)}\cdot \C[x]_j=0\in R^g/\langle P_{Z_1}\rangle$ for all degrees $\ell,j\ge 0$ such that $\ell\le e$, $j\le 2(e-\ell)$ and $2(e-\ell)-j\le (d-2)(\frac{n-k}{2})$. One gets a similar assertion for $q_2$ by symmetry.
\end{itemize}
\end{theorem}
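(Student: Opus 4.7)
The plan is to exploit the tensor product decomposition $R^{f+g}=R^f\otimes R^g$, valid because the Jacobian ideal of $f(x)+g(y)$ is generated by partial derivatives in the disjoint sets of variables $x$ and $y$. Combined with \cref{thm2}, which identifies $P_{[J(Z_1,Z_2)]}=P_{Z_1}P_{Z_2}$ and $R^{f+g,[J(Z_1,Z_2)]}=R^{f,[Z_1]}\otimes R^{g,[Z_2]}$, taking annihilators gives the compatible decomposition
\begin{equation*}
J^{f+g,[J(Z_1,Z_2)]}=J^{f,[Z_1]}\cdot R^g + R^f\cdot J^{g,[Z_2]},
\end{equation*}
so every tangent vector admits a presentation $G=A_1(x,y)G_1(x)+A_2(x,y)G_2(y)$ with $G_1\in J^{f,[Z_1]}$ and $G_2\in J^{g,[Z_2]}$, as in the statement.

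I would then substitute these decompositions of $G$ and $H$ into the explicit formula for Maclean's form given in \cref{thmMaclean}. The form computes $q_\delta(G,H)$ via lifts of the relations $GP_\delta,HP_\delta\in J_F$ in the polynomial ring, paired against the Hessian of $F$. For $F=f+g$ the Hessian is block-diagonal in the $(x,y)$ splitting---the mixed partials $\partial_{x_i}\partial_{y_j}F$ all vanish---so after substitution the cross terms $A_1B_2G_1H_2$ and $A_2B_1G_2H_1$ contribute nothing, while the diagonal terms $A_1B_1G_1H_1$ and $A_2B_2G_2H_2$ are confined to a single tensor factor. Applying the intrinsic one-factor Maclean formulas to the diagonal contributions yields $A_1B_1P_{Z_2}q_1(G_1,H_1)$ and $A_2B_2P_{Z_1}q_2(G_2,H_2)$; the factor $P_{Z_2}$ (resp.\ $P_{Z_1}$) appears as a spectator because the full Hodge polynomial is $P_{Z_1}P_{Z_2}$ while $q_1,q_2$ pair against only one factor. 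Reading the result in $R^{f+g}/\langle P_{Z_1}P_{Z_2}\rangle$ gives \eqref{eqQFFjoin}.

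For item (i), vanishing of $q_1,q_2$ in all degrees $\le e$ together with \eqref{eqQFFjoin} immediately forces $q=0$ in degree $e$, since each factor on the right appears in degree $\le e$. For item (ii), the argument is by probing with specific tangent vectors: take $G=A_1 G_1$ and $H=B_1 H_1$ with $G_1,H_1\in J^{f,[Z_1]}_\ell$ and $A_1,B_1$ bihomogeneous monomials in $\C[x,y]$ of total degree $e-\ell$. Assuming $q(G,H)=0$ in degree $e$, \eqref{eqQFFjoin} forces $A_1B_1\,P_{Z_2}\,q_1(G_1,H_1)\equiv 0\pmod{\langle P_{Z_1}P_{Z_2}\rangle}$ in $R^{f+g}$. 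Varying the $y$-bidegree of $A_1B_1$ through its admissible range $0\le 2(e-\ell)-j\le(d-2)\tfrac{n-k}{2}$ (the latter being the degree of $P_{Z_2}$, so that $y^\gamma P_{Z_2}$ remains below the socle of $R^g$), the non-degeneracy of the Gorenstein pairing on $R^g$ allows one to cancel the $P_{Z_2}$ factor and conclude that $x^{\alpha+\alpha'}q_1(G_1,H_1)=0$ in $R^f/\langle P_{Z_1}\rangle$ for every $x^{\alpha+\alpha'}\in\C[x]_j$, which is the stated vanishing.

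The main obstacle I anticipate is rigorously establishing the compatibility of Maclean's construction with the tensor splitting---specifically, that restricting the form on $R^{f+g}$ to tangent vectors supported in a single factor recovers the intrinsic one-factor Maclean form times the Hodge polynomial of the opposite factor. This reduces to the block-diagonal structure of the Hessian of $f+g$ in the disjoint variables together with a careful choice of polynomial lifts that respects the tensor product decomposition of the Jacobian rings; once this compatibility is established, the rest of the proof is a formal tensor manipulation.
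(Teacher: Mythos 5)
Your overall strategy --- decompose $G$ and $H$ as in the statement, substitute into Maclean's formula, and exploit the splitting $R^{f+g}=R^f\otimes R^g$ --- is the same as the paper's. But the central step of the proof of \eqref{eqQFFjoin} is missing, and the reason you give for the disappearance of the unwanted terms is not the right one. Maclean's form \eqref{eqQFF} is not a pairing against the Hessian of $F$; it is $q(G,H)=\sum_i\bigl(H\,\partial Q_i/\partial x_i-R_i\,\partial G/\partial x_i\bigr)$ computed from lifts $G\cdot P_\lambda=\sum_iQ_i\,\partial F/\partial x_i$. Writing $G_1P_{Z_1}=\sum_iQ_i\,\partial f/\partial x_i$, $H_1P_{Z_1}=\sum_iS_i\,\partial f/\partial x_i$, etc., the natural lift of $G\cdot P_{Z_1}P_{Z_2}$ has $x_i$-components $A_1P_{Z_2}Q_i$ and $y_j$-components $A_2P_{Z_1}R_j$; when you apply the Leibniz rule in \eqref{eqQFF} the troublesome contributions are not the ``cross terms'' you name but the terms where the derivative falls on the coefficients $A_1,A_2$ (which depend on both $x$ and $y$), namely
\begin{equation*}
B_1P_{Z_2}\sum_{i}(H_1Q_i-G_1S_i)\frac{\partial A_1}{\partial x_i}+B_2P_{Z_1}\sum_{j}(H_2R_j-G_2T_j)\frac{\partial A_2}{\partial y_j}.
\end{equation*}
Block-diagonality of the Hessian of $f+g$ says nothing about these. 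The missing idea is that $\sum_i(H_1Q_i-G_1S_i)\,\partial f/\partial x_i=H_1G_1P_{Z_1}-G_1H_1P_{Z_1}=0$, and since the partials of $f$ form a regular sequence, exactness of the Koszul complex forces each coefficient $H_1Q_i-G_1S_i$ to lie in $J^f$ (similarly for $g$), so these extra terms vanish in $R^{f+g}/\langle P_{Z_1}P_{Z_2}\rangle$. You flag ``compatibility with the tensor splitting'' as an anticipated obstacle, but without identifying these terms and supplying the regular-sequence argument, \eqref{eqQFFjoin} is not proved.

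Your sketch of item (ii) is essentially the paper's argument (probe with $G=H=AG_1$, realize arbitrary monomials of degree $2(e-\ell)$ as products $A_1A_2$ via the difference-of-squares identity, and use a $y$-factor $S$ with $S\cdot P_{Z_2}\notin J^g$, which exists exactly under the hypothesis $2(e-\ell)-j\le(d-2)\tfrac{n-k}{2}$). One caution: ``cancelling the $P_{Z_2}$ factor by non-degeneracy'' is not immediate because the vanishing is only modulo $\langle P_{Z_1}P_{Z_2}\rangle$, so you must first absorb a term $T(x,y)P_{Z_1}P_{Z_2}$; the paper does this by expanding $T$ in a basis of $R^g_{2(e-\ell)-j}$ adapted to the kernel of multiplication by $P_{Z_2}$ and then comparing coefficients in $R^f\otimes R^g$. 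That step should be made explicit, but it is a refinement of what you propose rather than a different idea.
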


\begin{proof}
Write 
$$
G_1\cdot P_{Z_1}=\sum_{i=0}^{k+1}Q_i(x)\frac{\partial g}{\partial x_i} \ \ , \ \ \ G_2\cdot P_{Z_2}=\sum_{j=0}^{n-k-1}R_j(y)\frac{\partial h}{\partial y_j} \ ,
$$
$$
H_1\cdot P_{Z_1}=\sum_{i=0}^{k+1}S_i(x)\frac{\partial g}{\partial x_i} \ \ , \ \ \ H_2\cdot P_{Z_2}=\sum_{j=0}^{n-k-1}T_j(y)\frac{\partial h}{\partial y_j} \ .
$$
then it follows by \eqref{eqQFF} that
$$
q(G,H)=A_1B_1P_{Z_2} q_1(G_1,H_1)+A_2B_2 P_{Z_1} q_2(G_2,H_2)    
$$
$$
+B_1P_{Z_2}\sum_{i=0}^{k+1}(H_1Q_i-G_1S_i)\frac{\partial A_1}{\partial x_i}+B_2P_{Z_1}\sum_{j=0}^{n-k-1}(H_2R_j-G_2T_j)\frac{\partial A_2}{\partial y_j}.
$$
Note that $\sum_{i=0}^{k+1}(H_1Q_i-G_1S_i)\frac{\partial g}{\partial x_i}=0$ and $\sum_{j=0}^{n-k-1}(H_2R_j-G_2T_j)\frac{\partial h}{\partial y_j}=0$. Since $(\frac{\partial g}{\partial x_0},\ldots,\frac{\partial g}{\partial x_{k+1}})$ and $(\frac{\partial h}{\partial y_0},\ldots,\frac{\partial h}{\partial y_{n-k-1}})$ are regular sequences, it follows by the exactness of the Koszul complex that 
$H_1Q_i-G_1S_i\in J^g$ and $H_2R_j-G_2T_j\in J^h$, and so we obtain \eqref{eqQFFjoin}. From \eqref{eqQFFjoin} we obtain (i) by a direct computation in the generators of $J^{g+h,[J(Z_1,Z_2)]}$ which are generators of either $J^{g,[Z_1]}$ or $J^{h,[Z_2]}$ (by \eqref{eqjoinAGalg}).  

In order to show (ii) consider $G=H=A(x,y)G_1(x)$ for any $\ell\le e$ and any $G_1\in J^{g,[Z_1]}_{\ell}$. Then by \eqref{eqQFFjoin}
$$
A^2\cdot P_{Z_2}\cdot q_1(G_1,G_1)=0\in R^{g+h}/\langle P_{Z_1}\cdot P_{Z_2}\rangle
$$
for all $A\in \C[x,y]_{e-\ell}$. In particular, for any monomial $x^\alpha y^\beta \in \C[x,y]_{2(e-\ell)}$ we can write it as $x^\alpha y^\beta=A_1A_2$ with $A_1,A_2\in\C[x,y]_{e-\ell}$ and so
$$
x^\alpha y^\beta \cdot P_{Z_2}\cdot q_1(G_1,G_1)=\left(\frac{(A_1+A_2)^2}{4}-\frac{(A_1-A_2)^2}{4}\right)P_{Z_2}\cdot q_1(G_1,G_1)=0\in R^{g+h}/\langle P_{Z_1}\cdot P_{Z_2}\rangle. 
$$
From this, it follows in fact that for any $j\le 2(e-\ell)$ and any two polynomials $Q(x)\in\C[x]_{j}$ and $S(y)\in\C[y]_{2(e-\ell)-j}$
$$
Q\cdot S\cdot P_{Z_2}\cdot q_1(G_1,G_1)=0\in R^{g+h}/\langle P_{Z_1}\cdot P_{Z_2}\rangle.
$$
As $2(e-\ell)-j\le (d-2)(\frac{n-k}{2})$ we choose $S$ such that $S\cdot P_{Z_2}\notin J^{h}$, then there exists some $T(x,y)\in \C[x,y]$ of bi-degree $(\ell+j,2(e-\ell)-j)$ such that
\begin{equation}
\label{eqq1}
P_{Z_2}(Q(x)\cdot S(y) \cdot q_1(G_1,G_1)-T(x,y)\cdot P_{Z_1})\in J^{g+h}.  
\end{equation}
Considering $S_1(y),\ldots, S_t(y)\in\C[y]_{2(e-\ell)-j}$ such that $\{S(y),S_1(y),\ldots,S_t(y)\}$ is a basis of $R^h_{2(e-\ell)-j}$ and $\{S_1(y),\ldots,S_p(y)\}$ is a basis of $\ker(R^h_{2(e-\ell)-j}\xrightarrow{\cdot P_{Z_2}}R^{h}_{2(e-\ell)+(d-2)(\frac{n-k}{2})-j})$ we can write 
$$
T(x,y)=U(x)S(y)+\sum_{m=1}^tU_m(x)S_m(y)\in R^{g+h}.
$$
Since $\{P_{Z_2}S(y),P_{Z_2}S_{p+1}(y),\ldots,P_{Z_2}S_t(y)\}$ is a basis of $R^h_{2(e-\ell)+(d-2)(\frac{n-k}{2})-j}$ and $R^{g+h}=R^g\otimes R^h$, then \eqref{eqq1} is equivalent to have
$$
Q(x)\cdot q_1(G_1,G_1)-U(x)\cdot P_{Z_1}=0 \in R^{g}
$$
and $P_{Z_1}U_m(x)=0\in R^g$ for all $m=p+1,\ldots, t$. Therefore $Q(x)\cdot q_1(G_1,G_1)=0\in R^g/\langle P_{Z_1}\rangle$ for all $Q(x)\in\C[x]_j$.
\end{proof}

\section{Examples in Fermat varieties}
\label{sec5}
In this section we give examples of join algebraic cycles inside Fermat varieties, illustrating how we can use the join structure to simplify their study. We focus on combinations of two linear cycles inside low degrees Fermat varieties, whose corresponding Hodge locus is not known to be reduced. This kind of combinations have already been studied by Movasati, Kloosterman and the second author \cite{MV,movasati2023hodge,villaflor2021periods,movasati2023hodge, kloosterman2023conjecture} as a non-trivial case to study the Variational Hodge Conjecture for reducible algebraic cycles. 

\bigskip

Along this section $X:=\{f:=x_0^d+\dots+x_{n+1}^d=0\}$ is the degree $d$ Fermat variety of even dimension $n$. Its automorphism group corresponds to $\text{Aut}(X)=G\rtimes \mathfrak{S}_{n+2}$, where $\mathfrak{S}_{n+2}$ acts by permutation on the coordinates and $G=(\Z/d\Z)^{n+2}/\im(a\in \Z/d\Z\mapsto (a,\ldots,a)\in (\Z/d\Z)^{n+2})\simeq (\Z/d\Z)^{n+1}$ acts diagonally as
$$
a\cdot (x_0:\cdots:x_{n+1})=(\zeta_d^{a_0}x_0:\cdots:\zeta_d^{a_{n+1}}x_{n+1}),
$$
where for any $k>0$, $\zeta_k$ denotes the $k$-th primitive root of unity $e^{\frac{2\pi i}{k}}$. The Fermat variety contains several $\frac{n}{2}$-dimensional linear cycles, which are obtained as the orbit under the action of $\text{Aut}(X)$ on the cycle
$$
\P^\frac{n}{2}:=\{x_0-\zeta_{2d}x_1=x_2-\zeta_{2d}x_3=\cdots=x_n-\zeta_{2d}x_{n+1}=0\}.
$$
\begin{example}
\label{ex:AGlincyc}
Consider the zero dimensional Fermat variety $X_0=\{x_0^d+x_1^d=0\}$, and a point $Z_0=\{(\zeta_{2d}:1)\}\subseteq X_0$. Since this is a complete intersection cycle, it follows by \cite[Theorem 1.1]{villaflor2021periods} that the cycle class of $Z_0$ has primitive part
$$
[Z_0]_\prim=\frac{-1}{d}\res\left(\frac{P_{Z_0}(x_0dx_1-x_1dx_0)}{x_0^d+x_1^d}\right)
$$
for the associated degree $(d-2)$ polynomial 
$$
P_{Z_0}=d\zeta_{2d}\left(\frac{x_0^{d-1}-(\zeta_{2d}x_1)^{d-1}}{x_0-\zeta_{2d}x_1}\right).
$$
Consequently $J^{x_0^d+x_1^d,[Z_0]}=\langle x_0-\zeta_{2d}x_1,x_1^{d-1}\rangle$ and the quadratic fundamental form $q$ vanishes (by \cref{rmkqffjac} this is reduced to check that $q(x_0-\zeta_{2d}x_1,x_0-\zeta_{2d}x_1)=0$).

For higher dimensions, the Fermat polynomial $x_0^d+\cdots+x_{n+1}^d$ can be written as a sum of $\frac{n}{2}+1$ Fermat polynomials in two variables. Let $X_i=\{x_{2i-2}^d+x_{2i-1}^d=0\}$, and $Z_i=\{(\zeta_{2d}:1)\}\subseteq X_i$ for each $i=1,\ldots,\frac{n}{2}+1$, then 
$$
\P^\frac{n}{2}=J(Z_1,\ldots,Z_{\frac{n}{2}+1}).
$$
In consequence
$$
P_{\P^\frac{n}{2}}=d^{\frac{n}{2}+1}\zeta_{2d}^{\frac{n}{2}+1}\prod_{i=1}^{\frac{n}{2}+1}\left(\frac{x_{2i-2}^{d-1}-(\zeta_{2d}x_{2i-1})^{d-1}}{x_{2i-2}-\zeta_{2d}x_{2i-1}}\right)
$$
and so $J^{f,[\P^\frac{n}{2}]}=\langle x_0-\zeta_{2d}x_1,x_1^{d-1},\ldots,x_n-\zeta_{2d}x_{n+1},x_{n+1}^{d-1}\rangle$. By item (i) of \cref{corqff} its quadratic fundamental form also vanishes. One can do similar computations for all other linear cycles in the Fermat variety.
\end{example}

\begin{example}
\label{exam:1802}
Let $-1\le m\le \frac{n}{2}$ be an integer. Consider inside $\P^{n+1}$ the linear subvarieties
$$
\P^{n-m}:=\{x_{n-2m}-\zeta_{2d}x_{n-2m+1}=x_{n-2m+2}-\zeta_{2d}x_{n-2m+3}=\cdots=x_n-\zeta_{2d}x_{n+1}=0\},
$$
$$
\P^\frac{n}{2}:=\{x_0-\zeta_{2d}x_1=x_2-\zeta_{2d}x_3=\cdots=x_{n-2m-2}-\zeta_{2d}x_{n-2m-1}=0\}\cap \P^{n-m},
$$
$$\check{\P}^{\frac{n}{2}}:=\{x_{0}-\zeta^{\alpha_0}_{2d}x_{1}=\dots=x_{n-2m-2}-\zeta_{2d}^{\alpha_{n-2m-2}}x_{n-2m-1}=0\}\cap \P^{n-m},$$
where $\alpha_0,\alpha_2,\dots,\alpha_{n-2m-2}\in\{3,5,\dots,2d-1\}$. Then

$$
\P^{m}:=\P^\frac{n}{2}\cap\check\P^\frac{n}{2}=\{x_0=x_1=x_2=x_3=\cdots=x_{n-2m-1}=0\}\cap\P^{n-m}.
$$
It turns out that the linear combination $Z:=r\P^\frac{n}{2}+\check{r}\check\P^\frac{n}{2}$ of these two $\frac{n}{2}$-dimensional linear cycles is a join algebraic cycle, for all $r,\check{r}\in\Z\setminus\{0\}.$ In fact, inside each degree $d$ Fermat variety
$$
X_1:=\{g:=x_0^d+\cdots+x_{n-2m-1}^d=0\}\subseteq\P^{n-2m-1}
$$
and 
$$X_2:=\{h:=x_{n-2m}^d+\cdots+x_{n+1}^d=0\}\subseteq\P^{2m+1}
$$
we can consider the algebraic cycles $Z_1\in \CH^{\frac{n}{2}-m-1}(X_1)$ and $Z_2\in\CH^m(X_2)$ given by
$$
Z_1:=rL+\check{r}\check{L},
$$
$$
Z_2:=\{x_{n-2m}-\zeta_{2d}x_{n-2m+1}=x_{n-2m+2}-\zeta_{2d}x_{n-2m+3}=\cdots=x_n-\zeta_{2d}x_{n+1}=0\}\subseteq X_2,
$$
where 
$$
L:=\{x_0-\zeta_{2d}x_1=x_2-\zeta_{2d}x_3=\cdots=x_{n-2m-2}-\zeta_{2d}x_{n-2m-1}=0\}\subseteq X_1,
$$
$$
\check{L}:=\{x_0-\zeta_{2d}^{\alpha_0}x_1=x_2-\zeta_{2d}^{\alpha_2}x_3=\cdots=x_{n-2m-2}-\zeta_{2d}^{\alpha_{n-2m-2}}x_{n-2m-1}=0\}\subseteq X_1.
$$
Since $\P^\frac{n}{2}=J(L,Z_2)$ and $\check\P^\frac{n}{2}=J(\check{L},Z_2)$ then 
$$
Z=r\P^\frac{n}{2}+\check{r}\check\P^\frac{n}{2}=rJ(L,Z_2)+\check{r}J(\check{L},Z_2)=J(Z_1,Z_2)\in \CH^{\frac{n}{2}}(X),
$$
where $X=\{f:=g+h=x_0^d+\cdots+x_{n+1}^d=0\}$ is the $n$-dimensional Fermat variety. By \cite[Theorem 1.3]{villaflor2021periods} the Hodge locus $V_{[Z]}$ satisfies
$$V_{[Z]}=V_{[\P^{\frac{n}{2}}]}\cap V_{[\check{\P}^{\frac{n}{2}}]}$$
whenever $d\geq 3$ and $m<\frac{n}{2}-\frac{d}{d-2}$. On the other hand, it follows from \cite[Propositions 17.8 and 17.9]{ho13} that $V_{[\P^{\frac{n}{2}}]}\cap V_{[\check{\P}^{\frac{n}{2}}]}$ is smooth and reduced without restrictions on $d$ and $m$. In particular, for $d\geq 3$ and $m<\frac{n}{2}-\frac{d}{d-2},$  $V_{[Z]}$ is smooth and reduced. The cases not covered in \cite[Theorem 1.3]{villaflor2021periods} are: $(d,m)=(3,\frac{n}{2}-3),$ in which case Movasati conjectured $V_{[Z]}$ is smooth (see \cite{movasati2023hodge}), $(d,m)=(3, \frac{n}{2}-2), (4, \frac{n}{2}-2)$ and $m=\frac{n}{2}-1$ with $r\neq \check{r}.$ In this last case when $r=\Check{r}$ the algebraic cycle $Z$ is a complete intersection and  $V_{[Z]}$ parametrizes hypersurfaces containing a
complete intersection of type $(1,1,\dots,1, 2)$. In the recent article \cite{kloosterman2023conjecture} Kloosterman showed that if $(d,m)=(3,\frac{n}{2}-3)$, $n\ge 10$ and $r\neq \check{r}$ then $V_{[Z]}$ is not smooth, disproving Movasati's conjecture. Moreover, he showed that when $r=\check{r}$ and $n\ge 4$, $V_{[Z]}$ is smooth. Similar results are obtained by Kloosterman in the cases $(d,m)=(3,\frac{n}{2}-2)$ and $(4,\frac{n}{2}-2)$. We will analyze each of the above cases separately, using the join description to determine their associated Artin Gorenstein ideals and corresponding quadratic fundamental forms.
\end{example}

\begin{prop}
	\label{prop:2401}
	Consider the notation of \cref{exam:1802}. For $d=3,\;n\geq 4,\; m=\frac{n}{2}-3,$ the Artinian Gorenstein ideal $J^{f,[Z]}$ associated to the algebraic cycle $Z$ is
\begin{equation*}
		\begin{split}
			J^{f,[Z]}=&\Big\langle \{x^2_{j}\}_{j=0}^{n+1}, \ \{x_{2j}-\zeta_6 x_{2j+1}\}_{j=3}^{\frac{n}{2}}, \ \{x_{2j}x_{2j+1}\}_{j=0}^2, \  A_1x_1x_3x_4+A_2x_1x_3x_5\\ 
			& x_{0}x_{2}+B_{1}x_{1}x_{2}+B_{2}x_{1}x_{3}, \  x_{0}x_{3}+C_{1}x_{1}x_{2}+C_{2}x_{1}x_{3}, \ x_0x_4+D_1x_1x_4+D_2x_1x_5\\
			& x_0x_5+E_1x_1x_4+E_2x_1x_5, \  x_{2}x_{4}+F_{1}x_{3}x_{4}+F_{2}x_{3}x_{5}, \ x_{2}x_{5}+G_{1}x_{3}x_{4}+G_{2}x_{3}x_{5} \Big\rangle
		\end{split}
	\end{equation*}
 where $(A_1:A_2)=(-r+\check{r}\zeta_6^{\alpha_0+\alpha_2+\alpha_4}:r\zeta_6-\check{r}\zeta_6^{\alpha_0+\alpha_2+2\alpha_4}) \in \P^1$,
 $$
B_1=\frac{-(\zeta_6^{\alpha_0+\alpha_2}-\zeta_6)}{\zeta_6^{\alpha_2}-\zeta_6}, \  B_2=\frac{\zeta_6^{\alpha_2+1}(\zeta_6^{\alpha_0}-\zeta_6)}{\zeta_6^{\alpha_2}-\zeta_6}, \ C_1=\frac{-(\zeta_6^{\alpha_0}-\zeta_6)}{\zeta_6^{\alpha_2}-\zeta_6}, \ C_2=\frac{\zeta_6(\zeta_6^{\alpha_0}-\zeta_6^{\alpha_2})}{\zeta_6^{\alpha_2}-\zeta_6},
 $$
 $$
 D_1=\frac{-(\zeta_6^{\alpha_0+\alpha_4}-\zeta_6^2)}{\zeta_6^{\alpha_4}-\zeta_6}, \ D_2=\frac{\zeta_6^{\alpha_4+1}(\zeta_6^{\alpha_0}-\zeta_6)}{\zeta_6^{\alpha_4}-\zeta_6}, \ E_1=\frac{-(\zeta_6^{\alpha_0}-\zeta_6)}{\zeta_6^{\alpha_4}-\zeta_6}, \ E_2=\frac{\zeta_6(\zeta_6^{\alpha_0}-\zeta_6^{\alpha_4})}{\zeta_6^{\alpha_4}-\zeta_6},
 $$
 $$
 F_1=\frac{-(\zeta_6^{\alpha_2+\alpha_4}-\zeta_6^2)}{\zeta_6^{\alpha_4}-\zeta_6}, \ F_2=\frac{\zeta_6^{\alpha_4+1}(\zeta_6^{\alpha_2}-\zeta_6)}{\zeta_6^{\alpha_4}-\zeta_6}, \ G_1=\frac{-(\zeta_6^{\alpha_2}-\zeta_6)}{\zeta_6^{\alpha_4}-\zeta_6}, \ G_2=\frac{\zeta_6(\zeta_6^{\alpha_2}-\zeta_6^{\alpha_4})}{\zeta_6^{\alpha_4}-\zeta_6}.
 $$
 In particular, the degree $k:=\frac{n}{2}+4$ piece of the quadratic fundamental form vanishes on $\text{Sym}^2(J^{f,[Z]}_3)$.
\end{prop}

\begin{proof}
Since $Z=J(Z_1,Z_2)$ is a join algebraic cycle, by \cref{thm2} it is enough to compute $J^{g,[Z_1]}$ and $J^{h,[Z_2]}$. In \cref{ex:AGlincyc} we already computed $J^{h,[Z_2]}$, and so we just need to show that
\begin{equation*}
\begin{split}
J^{g,[Z_1]}=&\Big\langle x_0^2, \ x_1^2, \ x_2^2, \ x_3^2, \ x_4^2, \ x_5^2, \ x_0x_1, \ x_2x_3, \ x_4x_5, \  A_1x_1x_3x_4+A_2x_1x_3x_5\\ 
			& x_{0}x_{2}+B_{1}x_{1}x_{2}+B_{2}x_{1}x_{3}, \  x_{0}x_{3}+C_{1}x_{1}x_{2}+C_{2}x_{1}x_{3}, \ x_0x_4+D_1x_1x_4+D_2x_1x_5\\
			& x_0x_5+E_1x_1x_4+E_2x_1x_5, \  x_{2}x_{4}+F_{1}x_{3}x_{4}+F_{2}x_{3}x_{5}, \ x_{2}x_{5}+G_{1}x_{3}x_{4}+G_{2}x_{3}x_{5} \Big\rangle.
   \end{split}
	\end{equation*}
Note that the right hand side is contained in the left hand side. Assume that $A_1\neq 0$ (the case where $A_1= 0$ is analogue), then the ideal generated by the leading terms in the lexicographic monomial ordering is
$$
\langle x_0^2,x_1^2,x_2^2,x_3^2,x_4^2,x_5^2, x_0x_1,x_0x_2,x_0x_3,x_0x_4,x_0x_5,x_2x_3, x_2x_4,x_2x_5,x_4x_5,x_1x_3x_4\rangle\subseteq \text{LT}(J^{g,[Z_1]}).
$$
Thus if we show that both monomial ideals have the same Hilbert function we are done (and in fact we conclude that the generators given above are a Gr\"obner basis of $J^{g,[Z_1]}$). To see this, note that for the left hand side monomial ideal, it is very easy to compute its Hilbert function, and in fact we see that the quotient ring has Hilbert function 1, 6, 6, 1, and 0 for degree bigger than 3. On the other hand the Hilbert function of $R^{g,[Z_1]}$ is of the form 1, $\ell$, $\ell$, 1 and $0$ for degree bigger than 3 (since $J^{g,Z_1}$ is Artinian Gorenstein of socle in degree 3). Thus it is reduced to show that $\ell=6$. In other words, to show that $J^{g,[Z_1]}_1=0$. And this can be shown using \cite[Proposition 2.1]{villaflor2021periods}. The last statement about the quadratic fundamental form follows from \cref{corqff} item (i) and a routine verification that the quadratic fundamental form $q_1|_{\text{Sym}^2(J^{g,[Z_1]})}$ vanishes in degree $\le 3$.
\end{proof}

\begin{prop}
In the context of \cref{exam:1802} consider $d=3$, $n\ge 2$ and $m=\frac{n}{2}-2$. The Artinian Gorenstein ideal $J^{f,[Z]}$ associated to algebraic cycle $Z$ is 
\begin{equation*}
	\begin{split}
		J^{f,[Z]}=&\Big\langle \{x^2_{j}\}_{j=0}^{2n+1}, \ \{x_{2j}-\zeta_6 x_{2j+1}\}_{j=2}^{\frac{n}{2}}, \ x_0x_1, \ x_2x_3, \ A_1x_1x_2+A_2x_1x_3, \\
  & B_1x_0x_2+B_2x_1x_2+B_3x_1x_3, \  C_1x_0x_3+C_2x_1x_2+C_3x_1x_3 \Big\rangle
	\end{split}
\end{equation*}
where $(A_1:A_2)=(r\zeta_6^2+\check{r}\zeta_6^{\alpha_0+\alpha_2}:r-\check{r}\zeta_6^{\alpha_0+2\alpha_2})\in\P^1$,
$$
(B_1:B_2:B_3)=\left\{\begin{array}{cc}
    (r\zeta_6^2+\check{r}\zeta_6^{\alpha_0+\alpha_2}:0:r\zeta_6-\check{r}\zeta_6^{2(\alpha_0+\alpha_2)}) & \text{ if }A_1\neq 0, \\
    (r-\check{r}\zeta_6^{\alpha_0+2\alpha_2}:-r\zeta_6+\check{r}\zeta_6^{2(\alpha_0+\alpha_2)}:0) & \text{ if }A_1= 0,
\end{array}\right.
$$
$$
(C_1:C_2:C_3)=\left\{\begin{array}{cc}
    (r\zeta_6^2+\check{r}\zeta_6^{\alpha_0+\alpha_2}:0:r-\check{r}\zeta_6^{2\alpha_0+\alpha_2}) & \text{ if }A_1\neq 0, \\
    (r-\check{r}\zeta_6^{\alpha_0+2\alpha_2}:-r+\check{r}\zeta_6^{2\alpha_0+\alpha_2}:0) & \text{ if }A_1= 0.
\end{array}\right.
$$
In particular, the degree $k:=\frac{n}{2}+4$ piece of the quadratic fundamental form vanishes.
\end{prop}

\begin{proof}
As in the proof of the previous proposition we just need to show that 
\begin{equation*}
	\begin{split}
		J^{g,[Z_1]}=&\Big\langle x_0^2, \ x_1^2, \ x_2^2, \ x_3^2, \ x_0x_1, \ x_2x_3, \ A_1x_1x_2+A_2x_1x_3, \\
  & B_1x_0x_2+B_2x_1x_2+B_3x_1x_3, \  C_1x_0x_3+C_2x_1x_2+C_3x_1x_3 \Big\rangle.
	\end{split}
\end{equation*}
The right hand side ideal is clearly contained in $J^{g,[Z_1]}$, hence it is enough to show that both ideals have the same Hilbert function. If $A_1\neq 0$ then the leading terms ideal of the right hand side ideal is
$$
\langle x_0^2, \ x_1^2, \ x_2^2, \ x_3^2, \ x_0x_1, \ x_2x_3, \ x_1x_2, \ x_0x_2, \ x_0x_3\rangle 
$$
whose quotient ring has Hilbert function equal to 1, 4, 1 and $0$ for degree bigger than $2$. If $A_1= 0$, the leading terms ideal is
$$
\langle x_0^2, \ x_1^2, \ x_2^2, \ x_3^2, \ x_0x_1, \ x_2x_3, \ x_1x_3, \ x_0x_2, \ x_0x_3\rangle 
$$
whose quotient ring also has Hilbert function equal to 1, 4, 1 and $0$ for degree bigger than $2$. Thus we are reduced to show that $R^{g,[Z_1]}$ has the same Hilbert function. Since $J^{g,[Z_1]}$ is Artinian Gorenstein of socle in degree 2 we just need to show that $J^{g,[Z_1]}_1=0$, which follows from \cite[Proposition 2.1]{villaflor2021periods}. The statement about the quadratic fundamental form follows from \cref{corqff} item (i) and the fact that $q_1|_{\text{Sym}^2(J^{g,[Z_1]})}$ vanishes in degree $\le 2$.
\end{proof}

\begin{prop}
In the context of \cref{exam:1802} let $d=4$, $n\ge 2$ and $m=\frac{n}{2}-2$. The Artinian Gorenstein ideal $J^{f,[Z]}$ associated to the algebraic cycle $Z$ is
	\begin{equation*}
	\begin{split}
		J^{f,[Z]}=&\Big\langle \{x^3_{2j+1}\}_{j=0}^{\frac{n}{2}}, \ \{x_{2j}-\zeta_8 x_{2j+1}\}_{j=2}^{\frac{n}{2}}, \ x_0x_1^2, \ x_2x_3^2, \ A_1x_1^2x_2x_3+A_2x_1^2x_3^2, \\ 
		& x_0x_2+B_1x_1x_2+B_2x_1x_3, \ x_0x_3+C_1x_1x_2+C_2x_1x_3, \\
  & x_0^2+D_1x_0x_1+D_2x_1^2, \ x_2^2+E_1x_2x_3+E_2x_3^2\Big\rangle
	\end{split}
\end{equation*}
where $(A_1:A_2)=(r\zeta_8^{2}+\check{r}\zeta_8^{\alpha_0+\alpha_2}:-(r\zeta_8^{3}+\check{r}\zeta_8^{\alpha_0+2\alpha_2}))\in\P^1$,
$$
B_1=\frac{\zeta_8^2-\zeta_8^{\alpha_0+\alpha_2}}{\zeta_8^{\alpha_2}-\zeta_8}, \ B_2=\frac{\zeta_8(\zeta_8^{\alpha_0+\alpha_2}-\zeta_8^{\alpha_2+1})}{\zeta_8^{\alpha_2}-\zeta_8}, \ C_1=\frac{\zeta_8-\zeta_8^{\alpha_0}}{\zeta_8^{\alpha_2}-\zeta_8}, \ C_2=\frac{\zeta_8(\zeta_8^{\alpha_0}-\zeta_8^{\alpha_2})}{\zeta_8^{\alpha_2}-\zeta_8},
$$
$$
D_1=\frac{-(\zeta_8^{2(\alpha_0+1)}+1)}{\zeta_8^{2}(\zeta_8^{\alpha_0}-\zeta_8)}, \ D_2=\frac{\zeta_8^{\alpha_0}(1+\zeta_8^{\alpha_0+3})}{\zeta_8^{2}(\zeta_8^{\alpha_0}-\zeta_8)}, \ E_1=\frac{-(\zeta_8^{2(\alpha_2+1)}+1)}{\zeta_8^{2}(\zeta_8^{\alpha_2}-\zeta_8)}, \ E_2=\frac{\zeta_8^{\alpha_2}(1+\zeta_8^{\alpha_2+3})}{\zeta_8^{2}(\zeta_8^{\alpha_2}-\zeta_8)}.
$$
In particular, the degree $k:=n+6$ piece of the quadratic fundamental form vanishes.
\end{prop}

\begin{proof}
As in the other cases, we are reduced to show that
\begin{equation*}
	\begin{split}
		J^{g,[Z_1]}=&\Big\langle x_1^3, \ x_3^3, \ x_0x_1^2, \ x_2x_3^2, \ A_1x_1^2x_2x_3+A_2x_1^2x_3^2, \\ 
		& x_0x_2+B_1x_1x_2+B_2x_1x_3, \ x_0x_3+C_1x_1x_2+C_2x_1x_3, \\
  & x_0^2+D_1x_0x_1+D_2x_1^2, \ x_2^2+E_1x_2x_3+E_2x_3^2\Big\rangle.
	\end{split}
 \end{equation*}
The right hand side ideal is clearly contained in $J^{g,[Z_1]}$. Let us assume that $A_1\neq 0$ (the case $A_1= 0$ is analogue), taking the ideal generated by the leading terms
in the lexicographical monomial ordering we get
$$
\langle x_0^2, \ x_2^2, \ x_0x_2, \ x_0x_3, \ x_1^3, \ x_3^3, \ x_0x_1^2, \ x_2x_3^2, \ x_1^2x_2x_3\rangle\subseteq \text{LT}(J^{g,Z_1}).
$$
For the left monomial ideal, the quotient ring has Hilbert function $1,4,6,4,1$ and $0$ for degree bigger than $4$. Thus it is enough to show that $J^{g,[Z_1]}_1=0$ and $\dim \  J^{g,[Z_1]}_2=4$. For this we use again \cite[Proposition 2.1]{villaflor2021periods} and check that 
$$
J^{g,[Z_1]}_1=\langle x_0-\zeta_8x_1,x_2-\zeta_8x_3\rangle_1\cap\langle x_0-\zeta_8^{\alpha_0}x_1,x_2-\zeta_8^{\alpha_2}x_3\rangle_1=0
$$
and
$$
J^{g,[Z_1]}_2=\langle x_0-\zeta_8x_1,x_2-\zeta_8x_3\rangle_2\cap\langle x_0-\zeta_8^{\alpha_0}x_1,x_2-\zeta_8^{\alpha_2}x_3\rangle_2
$$
$$
=\langle (x_0-\zeta_8x_1)(x_0-\zeta_8^{\alpha_0}x_1),(x_2-\zeta_8x_3)(x_0-\zeta_8^{\alpha_0}x_1),(x_0-\zeta_8x_1)(x_2-\zeta_8^{\alpha_2}x_3),(x_2-\zeta_8x_3)(x_2-\zeta_8^{\alpha_2}x_3)\rangle_2.
$$
The statement about the quadratic fundamental forms follows from \cref{corqff} item (i) and the verification that the quadratic fundamental form $q_1|_{\text{Sym}^2(J^{g,[Z_1]})}$ vanishes in degree $\le 4$.
\end{proof}

With the same notation of \cref{exam:1802}, the last remaining case to analyze is $m=\frac{n}{2}-1$ and $r\neq \check{r}$. In this case we are not giving a full description of the generators of the Artinian Gorenstein ideal, but instead we provide an element where the quadratic fundamental never vanishes for $d\ge 2+\frac{8}{n}$.

\begin{theorem}
\label{thm4}
For $d\ge 2+\frac{8}{n}$, $m=\frac{n}{2}-1$ and $r\neq\check{r}$, the degree $k=d+(d-2)(\frac{n}{2}+1)$ piece of the quadratic fundamental form associated to the Hodge locus $V_{r[\P^\frac{n}{2}]+\check{r}[\check{\P}^\frac{n}{2}]}$ does not vanish at the Fermat point. In consequence $V_{r[\P^\frac{n}{2}]+\check{r}[\check{\P}^\frac{n}{2}]}$ is not smooth.
\end{theorem}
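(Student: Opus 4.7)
The plan is to recognize $Z:=r[\P^\frac{n}{2}]+\check r[\check\P^\frac{n}{2}]$ as a join cycle in the sense of \cref{thm1}, and thereby reduce the quadratic fundamental form computation to the much simpler setting of a Fermat $0$-scheme via \cref{corqff}. Since $m=\frac{n}{2}-1$, the two linear cycles share a common $\P^{\frac{n}{2}-1}$ and differ in a single direction. After a Fermat-preserving coordinate change, I split the variables as $x=(x_0,x_1)$ and $y=(x_2,\ldots,x_{n+1})$, so that the Fermat polynomial decomposes as $f(x)+g(y)$ with $f=x_0^d+x_1^d$ cutting out a $d$-point scheme $X_1\subset\P^1$ and $g=\sum_{i\ge 2}x_i^d$ cutting out the $(n-2)$-dimensional Fermat hypersurface $X_2\subset\P^{n-1}$. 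I choose distinct $p_1,p_2\in X_1$ and a half-dimensional linear subspace $\check L\cong\P^{\frac{n}{2}-1}\subset X_2$ so that $\P^\frac{n}{2}=J(p_1,\check L)$ and $\check\P^\frac{n}{2}=J(p_2,\check L)$; then by bilinearity of the join, $Z=J(Z_1,\check L)$ with $Z_1:=rp_1+\check rp_2\in\CH^0(X_1)$.

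Now I apply \cref{corqff}: taking $G=A_1(x,y)G_1(x)$ with $G_1\in J^{f,[Z_1]}$ and $A_2$-component vanishing, one obtains
\[
q(G,G)=A_1^2\cdot P_{\check L}\cdot q_1(G_1,G_1)\in R^{f+g}/\langle P_Z\rangle,
\]
where $q_1$ is the QFF of the $0$-cycle $Z_1$ on the Fermat $0$-scheme $X_1$. It therefore suffices to find $G_1$ and $A_1$ so that the right-hand side is nonzero in degree $k=d+(d-2)(\frac{n}{2}+1)$. The existence of $G_1\in J^{f,[Z_1]}$ with $q_1(G_1,G_1)\neq 0$ reduces to a two-variable computation in $R^f=\C[x_0,x_1]/\langle x_0^{d-1},x_1^{d-1}\rangle$: the associated polynomials $P_{p_1},P_{p_2}$ are explicit degree $d-2$ polynomials, the Artinian Gorenstein ideal $J^{f,[Z_1]}$ is fully computable, and for $r\neq\check r$ a witness is produced by direct inspection (the asymmetry obstructs $P_{Z_1}$ from factoring in $R^f$ in the symmetric way).

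The main obstacle is lifting this two-variable non-vanishing to degree $k$ in $R^{f+g}/\langle P_Z\rangle$. Once $G_1$ and $A_1$ are chosen, one must verify that $A_1^2\cdot P_{\check L}\cdot q_1(G_1,G_1)$ does not collapse modulo $P_Z=P_{Z_1}P_{\check L}$ inside the full Jacobian ring $R^{f+g}$. The hypothesis $d\ge 2+\frac{8}{n}$, equivalent to $(d-2)\frac{n}{2}\ge 4$, guarantees that the $R^g$ factor admits nonzero monomial classes in the degree required to host $A_1^2 P_{\check L}$ applied to the witness, enabling the choice of a valid $A_1$ of sufficient flexibility. The final check is then a multidegree computation in the Fermat monomial ring: exploiting the fine multigrading of $R^{f+g}=\C[x,y]/\langle x_i^{d-1}\rangle_i$, one isolates a single surviving monomial whose coefficient is a nonzero polynomial in $(r,\check r)$ vanishing only on the diagonal $r=\check r$, thereby certifying non-vanishing of the degree $k$ piece of $q$ and, via \cref{thmMaclean}, the non-smoothness of $V_{r[\P^\frac{n}{2}]+\check{r}[\check{\P}^\frac{n}{2}]}$ at the Fermat point.
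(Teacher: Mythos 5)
Your reduction is exactly the one the paper uses: for $m=\frac{n}{2}-1$ the cycle is the join $J(Z_1,Z_2)$ of the $0$-cycle $Z_1=rL+\check r\check L$ on $\{x_0^d+x_1^d=0\}\subseteq\P^1$ with the common linear $\P^{\frac n2-1}$, and the hypothesis $d\ge 2+\frac 8n$ enters precisely where you place it, namely to ensure $4\le(d-2)\frac n2$ so that the degree-$4$ factor $A_1^2$ can be hosted in the $R^g$-tensor factor without exceeding its socle. This is literally \cref{corqff}(ii) with $e=d$, $\ell=d-2$, $j=0$, and citing that item would spare you the whole ``does not collapse modulo $P_Z$'' discussion, which is exactly what item (ii) packages; re-deriving it by hand as you sketch is possible but you would then need to repeat the tensor-decomposition argument from the proof of \cref{corqff}.

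The genuine gap is that the core of the theorem is asserted rather than proved. The statement ``for $r\neq\check r$ a witness is produced by direct inspection (the asymmetry obstructs $P_{Z_1}$ from factoring\dots)'' is not an argument, and the concluding picture of ``isolating a single surviving monomial'' does not match what actually has to happen. One must exhibit a specific $G_1\in J^{f,[Z_1]}_{d-2}$ and decide whether $q_1(G_1,G_1)$, which lives in $R^f_{2d-6}$ (spanned by $x_0^{d-2}x_1^{d-4},x_0^{d-3}x_1^{d-3},x_0^{d-4}x_1^{d-2}$), lies in the $2$-dimensional subspace $\langle P_{Z_1}\rangle_{2d-6}$; this is a codimension-one condition decided by a $3\times 3$ determinant, not by a monomial count. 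The paper takes $G_1=x_1^{d-3}\bigl((r\zeta_{2d}+\check r\zeta_{2d}^{\alpha_0})x_0+(r\zeta_{2d}^2+\check r\zeta_{2d}^{2\alpha_0})x_1\bigr)$, computes $q_1(G_1,G_1)$ from Maclean's formula \eqref{eqQFF}, and evaluates the determinant to $\pm\zeta_{2d}^{3\alpha_0+3}(\zeta_{2d}^{\alpha_0}-\zeta_{2d})^3r^2\check r^2(r-\check r)$; only at that point does the hypothesis $r\neq\check r$ do any work, and it is not a priori clear that the dependence on $(r,\check r)$ comes out as a polynomial vanishing exactly on the diagonal rather than, say, identically. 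Without producing the witness and carrying out this computation, your proposal shows only that the theorem would follow from the two-variable non-vanishing, not the non-vanishing itself.
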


\begin{proof}
The condition $d\ge 2+\frac{8}{n}$ allows use to use \cref{corqff} item (ii) applied for $e=d$, $\ell=d-2$ and $j=k=0$, to reduce the theorem to show that $q_1|_{\text{Sym}^2(J^{g,[Z_1]})}$ is non-zero in degree $d-2$. Consider
$$
G:=x_1^{d-3}((r\zeta_{2d}+\check{r}\zeta_{2d}^{\alpha_0})x_0+(r\zeta_{2d}^2+\check{r}\zeta_{2d}^{2\alpha_0})x_1)\in J^{g,[Z_1]}_{d-2}.
$$
The quadratic fundamental form at $G$ is 
$$
q_1(G,G)=r\check{r}\zeta_{2d}^{\alpha_0+1}(\zeta_{2d}^{\alpha_0}-\zeta_{2d})^2x_0^{d-4}x_1^{d-3}\{(r[\zeta_{2d}^2(d-1)+\zeta_{2d}^{\alpha_0+1}]+\check{r}[\zeta_{2d}^{2\alpha_0}(d-1)+\zeta_{2d}^{\alpha_0+1}])x_0
$$
$$
+(r[(d-1)\zeta_{2d}^2(\zeta_{2d}^{\alpha_0}+\zeta_{2d})+2\zeta_{2d}^{2\alpha_0+1}]+\check{r}[(d-1)\zeta_{2d}^{2\alpha_0}(\zeta_{2d}^{\alpha_0}+\zeta_{2d})+2\zeta_{2d}^{\alpha_0+2}])x_1\}.
$$
In order to see that this is non-zero in $R^{g}/\langle P_{Z_1}\rangle$, let us note first that $\langle P_{Z_1}\rangle_{2d-6}$ is a 2-dimensional subspace of $R^{g}_{2d-6}$. In fact, in the basis of $R^{g}_{2d-6}=\C\cdot x_0^{d-2}x_1^{d-4}\oplus \C\cdot x_0^{d-3}x_1^{d-3}\oplus\C\cdot x_0^{d-4}x_1^{d-2}$ 
$$
\langle P_{Z_1}\rangle_{2d-6}=\C\cdot Q_1\oplus \C\cdot Q_2
$$
where
$$
Q_1=(r\zeta_{2d}+\check{r}\zeta_{2d}^{\alpha_0})x_0^{d-2}x_1^{d-4}+ (r\zeta_{2d}^2+\check{r}\zeta_{2d}^{2\alpha_0})x_0^{d-3}x_1^{d-3}+ (r\zeta_{2d}^3+\check{r}\zeta_{2d}^{3\alpha_0})x_0^{d-4}x_1^{d-2},
$$
$$
Q_2=(r\zeta_{2d}^2+\check{r}\zeta_{2d}^{2\alpha_0})x_0^{d-2}x_1^{d-4}+(r\zeta_{2d}^3+\check{r}\zeta_{2d}^{3\alpha_0})x_0^{d-3}x_1^{d-3}+(r\zeta_{2d}^4+\check{r}\zeta_{2d}^{4\alpha_0})x_0^{d-4}x_1^{d-2}.
$$
Hence $q_1(G,G)$ vanishes if and only if $q_1(G,G)$, $Q_1$ and $Q_2$ are linearly dependent in $R^g_{2d-6}$. Using the monomial basis of $R^g_{2d-6}$ we can write a $3\times 3$ matrix $M$ whose columns correspond to $q_1(G,G)$, $Q_1$ and $Q_2$. Computing its determinant we obtain
$$
\det(M)=\pm\zeta_{2d}^{3\alpha_0+3}(\zeta_{2d}^{\alpha_0}-\zeta_{2d})^3r^2\check{r}^2(r-\check{r})
$$
which is non-zero for $r\neq \check{r}$.
\end{proof}

\begin{remark}
This result was already pointed out by Movasati in \cite[Theorem 18.3]{ho13} for a finite number of examples. In the case of surfaces Dan \cite{dan2021conjecture} showed that these Hodge loci are non reduced but with reduced structure equal to $V_{[\P^1]}\cap V_{[\check{\P}^1]}$ at a general surface. The higher dimensional case has also been studied recently by Kloosterman in  \cite{kloosterman2024cod1} who has shown that $V_{r[\P^\frac{n}{2}]+\check{r}[\check{\P}^\frac{n}{2}]}$ is smooth for $n\ge 4$ and coincides with $V_{[\P^\frac{n}{2}]}\cap V_{[\check{\P}^\frac{n}{2}]}$ at a general hypersurface. Kloosterman result 
 together with ours imply that $V_{r[\P^\frac{n}{2}]+\check{r}[\check\P^\frac{n}{2}]}$ is globally reducible (as a scheme), and more than one irreducible component is passing through the Fermat point.
\end{remark}

\section{Hilbert function associated to a Hodge cycle}
\label{sec6}
\cref{thm2} gives us the tensor product structure of the Artinian Gorenstein algebra associated to a join algebraic cycle. This structure helps us to study its associated Hilbert function.

\begin{definition}
\label{defHF}
Let $X=\{f=0\}\subseteq\P^{n+1}$ be a smooth degree $d$ hypersurface of even dimension $n$. For every $\lambda\in H^{\frac{n}{2},\frac{n}{2}}(X,\Q)$, its associated \textit{Hilbert function} $\HF_\lambda:\Z_{\ge 0}\rightarrow \Z_{\ge 0}$ is the Hilbert function of its associated Artinian Gorenstein algebra $R^{f,\lambda}=\C[x_0,\ldots,x_{n+1}]/J^{f,\lambda}$.
\end{definition}

\begin{cor}
\label{corHFjoin}
In the same context of \cref{thm1} we have $
\HF_{[J(Z_1,Z_2)]}=\HF_{[Z_1]}*\HF_{[Z_2]}
$, this means that for all $k\ge 0$
\begin{equation}
\label{eqHF}
\HF_{[J(Z_1,Z_2)]}(k)=\sum_{p+q=k}\HF_{[Z_1]}(p)\cdot \HF_{[Z_2]}(q).
\end{equation}
\end{cor}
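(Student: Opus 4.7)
The plan is to derive the corollary as a direct consequence of the tensor product decomposition established in \cref{thm2}, combined with the standard fact that the Hilbert function of a graded tensor product of finitely generated graded $\C$-algebras is the convolution of the Hilbert functions of the factors.

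More concretely, I would first invoke the equivalence \eqref{eqjoinAGalg} of \cref{thm2} applied to $\delta = [J(Z_1,Z_2)]_{\prim}$ (taking $c=1$), to obtain the graded $\C$-algebra isomorphism
\begin{equation*}
R^{f+g,[J(Z_1,Z_2)]} \;\cong\; R^{f,[Z_1]} \otimes_{\C} R^{g,[Z_2]}.
\end{equation*}
Here the tensor product is taken with the standard grading $(R^{f,[Z_1]} \otimes R^{g,[Z_2]})_k = \bigoplus_{p+q=k} R^{f,[Z_1]}_p \otimes_{\C} R^{g,[Z_2]}_q$, which is the natural grading since the variables $x_0,\ldots,x_{k+1}$ and $y_0,\ldots,y_{n-k-1}$ are disjoint sets, and the ideal $J^{f+g,[J(Z_1,Z_2)]}$ is generated in a way that respects the bi-grading coming from the splitting of variables.

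Then the corollary follows by taking dimensions degree-wise: by \cref{defHF},
\begin{equation*}
\HF_{[J(Z_1,Z_2)]}(k) = \dim_{\C} R^{f+g,[J(Z_1,Z_2)]}_k = \sum_{p+q=k} \dim_{\C} R^{f,[Z_1]}_p \cdot \dim_{\C} R^{g,[Z_2]}_q = \sum_{p+q=k} \HF_{[Z_1]}(p)\cdot \HF_{[Z_2]}(q),
\end{equation*}
which is exactly \eqref{eqHF}. There is no real obstacle here; the only point requiring mild care is to verify that the isomorphism in \eqref{eqjoinAGalg} is indeed an isomorphism of graded algebras with the grading just described, but this is built into the construction since $J^{f,[Z_1]} \subseteq \C[x]$ and $J^{g,[Z_2]} \subseteq \C[y]$ are homogeneous ideals in disjoint variable sets, and their sum inside $\C[x,y]$ cuts out the tensor product with the correct grading.
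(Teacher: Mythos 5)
Your proposal is correct and is exactly the argument the paper has in mind: its proof of \cref{corHFjoin} is the one-line ``This follows from \cref{thm2}'', i.e.\ the graded isomorphism $R^{f+g,[J(Z_1,Z_2)]}\cong R^{f,[Z_1]}\otimes R^{g,[Z_2]}$ from \eqref{eqjoinAGalg} followed by taking dimensions degree-wise. Your added remark about the grading being the standard one on the tensor product (because the variable sets are disjoint) is a harmless and correct elaboration of the same route.
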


\begin{proof}
This follows from \cref{thm2}.
\end{proof}

\begin{example}
Using the above corollary we can compute the Hilbert function of the examples inside Fermat described in the previous section. As an illustration for one linear cycle in Fermat (see \cref{ex:AGlincyc}) we get 
\begin{equation}
\label{eqHFlin}
\HF_{[\P^\frac{n}{2}]}=\varphi^{*(\frac{n}{2}+1)}
\end{equation}
where $\varphi:\Z_{\ge 0}\rightarrow \Z_{\ge 0}$ is the Hilbert function of a point in a $0$-dimensional Fermat variety
$$
\varphi(k)=\left\{\begin{array}{cc}
    1 & \text{ if }0\le k\le d-2, \\
    0 & \text{ otherwise}.
\end{array}\right.
$$
In other words $\HF_{[\P^\frac{n}{2}]}(k)$ counts the number of ways of writing $k$ as an ordered sum of $\frac{n}{2}+1$ numbers between $0$ and $d-2$.
\end{example}

\begin{remark}
The Hilbert function of \eqref{eqHFlin} is in fact the Hilbert function of a generic linear cycle inside a smooth degree $d$ hypersurface of even dimension $n$. This follows from the upper semi-continuity of the Hilbert function along the locus of hypersurfaces containing an $\frac{n}{2}$-dimensional linear cycle. In fact, the upper semi-continuity of the Hilbert function holds along the locus of hypersurfaces containing an $\frac{n}{2}$-dimensional complete intersection for any fixed multi-degree. This is a direct consequence of the explicit description of generators of the associated Artinian Gorenstein ideal which can be found in \cite[Example 2.1]{villaflor2022small}. In particular we can compute the Hilbert function of a generic complete intersection of type $(1,1,\ldots,1,k)$ by writing it as a join in Fermat. In general, for other types of algebraic cycles $\lambda$ we do not know whether the Hilbert function is upper semi-continuous along $V_\lambda$ or not.
\end{remark}

\section{Fake algebraic cycles}
\label{sec7}
In the article \cite{DuqueVillaflor} the authors found pathological algebraic cycles in all Fermat varieties of degree $3$, $4$ and $6$. They were pathological in the sense that their associated Hodge loci $V_\lambda$ had the biggest possible Zariski tangent space at the Fermat point without being $\lambda$ the class of a linear cycle (contradicting a conjecture of Movasati). These cycles were called fake linear cycles and were constructed from an arithmetic viewpoint using the Galois action in the cohomology of the Fermat variety. Using the join construction we can have a better understanding of these cycles as explicit combinations of linear cycles. In this section we will introduce a more general notion of fake algebraic cycles, inside any smooth hypersurface and we will show how one can find hypersurfaces containing fake linear cycles in any degree. 

\begin{definition}
\label{deffake}
Let $X=\{f=0\}\subseteq\P^{n+1}$ be a smooth degree $d$ hypersurface of even dimension $n$. Let $Z\subseteq X$ be an $\frac{n}{2}$-dimensional algebraic subvariety. A Hodge cycle $\lambda\in H^{\frac{n}{2},\frac{n}{2}}(X,\Q)$ is a \textit{fake version of $[Z]$} if 
$$
\HF_\lambda=\HF_{[Z]}
$$
but $\lambda_\prim$ is not a scalar multiple of $[Z]_\prim$.
\end{definition}

\begin{remark}
By \cite[Theorem 1.1]{DuqueVillaflor} all Fermat varieties of degree $d=3,4,6$ (and only for those degrees) admit fake linear cycles. In fact, in this case the main result shows that $\HF_\lambda(d)={\frac{n}{2}+d\choose d}-(\frac{n}{2}+1)^2=\HF_{[\P^\frac{n}{2}]}(d)$ implies 
\begin{equation}
\label{eqfakelcFermat}
P_\lambda=c_\lambda\prod_{j=0}^\frac{n}{2}\left(\frac{x_{2j}^{d-1}-(c_jx_{2j+1})^{d-1}}{x_{2j}-c_jx_{2j+1}}\right)
\end{equation}
for any $c_j\in\zeta_{2d}^{-3}\cdot \{z\in \Q(\zeta_d): \ |z|=1\}$ and some $c_\lambda\in \Q(\zeta_{2d})^\times$. From this we deduce that 
\begin{equation}
\label{eqAGfakelcFermat}
J^{f,\lambda}=\langle x_0-c_0x_1,x_2-c_1x_3,\ldots,x_n-c_\frac{n}{2}x_{n+1},x_0^{d-1},\ldots,x_{n+1}^{d-1}\rangle,
\end{equation}
which in turn implies that $\HF_\lambda=\HF_{[\P^\frac{n}{2}]}$. In the case where all $c_j$ are $d$-th roots of $-1$, $\lambda$ corresponds to the class of a linear cycle in Fermat. In all other cases $\lambda$ is a fake linear cycle. The description of the Artinian Gorenstein ideal \eqref{eqAGfakelcFermat} implies that
$$
R^{f,\lambda}=\bigotimes_{j=1}^{\frac{n}{2}+1} R^{f_j,\lambda_j}
$$
where $X_j=\{f_j(x_{2j-2},x_{2j-1}):=x_{2j-2}^d+x_{2j-1}^d=0\}\subseteq\P^1$ and $\lambda_j$ is the class of a $0$-cycle such that 
$$
P_{\lambda_j}=\frac{x_{{2j-2}}^{d-1}-(c_{2j-2}x_{{2j-1}})^{d-1}}{x_{{2j-2}}-c_{2j-2}x_{{2j-1}}}.
$$
In other words, each $\lambda_j$ is a $0$-dimensional fake linear cycle. Since this is a Hodge cycle, there exist $n_{j,1},\ldots,n_{j,d}\in\Q$ such that
$$
P_{\lambda_j}=\sum_{\ell=1}^dn_{j,\ell}\cdot P_{[p^j_\ell]}
$$
where $X_j=\{p^j_1,p^j_2,\ldots,p^j_{d}\}\subseteq\P^1$ (note that each $p^j_\ell\in \CH^0(X_j)$ is a linear cycle, and so we know how to compute $P_{[p^j_\ell]}$). It follows from \cref{thm2} that 
$$
\lambda_\prim=c\cdot J(\lambda_1,\lambda_2,\ldots,\lambda_{\frac{n}{2}+1})
$$
for some $c\in\Q^\times$.
In other words, every fake linear cycle is a linear combination of linear cycles given by
$$
\lambda=\sum_{\ell_1,\ell_2,\ldots,\ell_{\frac{n}{2}+1}=1}^d(\prod_{j=1}^{\frac{n}{2}+1}n_{j,\ell_j})\cdot J(p^1_{\ell_1},p^2_{\ell_2},\ldots,p^{\frac{n}{2}+1}_{\ell_{\frac{n}{2}+1}}).
$$
\end{remark}



\begin{remark}
The previous remark shows that the presence of fake linear cycles in degree $d=3,4,6$ Fermat varieties is due to their existence in $0$-dimensional Fermat varieties of such degrees. Using this observation one can go further and produce some fake versions of other algebraic cycles obtained as joins. For instance we can produce fake versions of complete intersection cycles of type $(1,1,\ldots,1,2)$ in Fermat varieties of degree $d=3,4,6$ by taking
$$
\lambda=J(\lambda_1,[Z_2])
$$
where $\lambda_1$ is a fake linear cycle in $X_1=\{x_0^d+\cdots+x_{n-1}^d=0\}$ and $Z_2=p_1+p_2\in \CH^0(X_2)$ for $X_2=\{x_n^d+x_{n+1}^d=0\}=\{p_1,p_2,\ldots,p_d\}$. More generally, for any algebraic cycle given as a cone 
$$
Z=J(pt,Z_2)
$$
we can construct a fake version of $Z$ if we replace the point by a $0$-dimensional fake linear cycle. Hence it is natural ask whether there are more $0$-dimensional hypersurfaces (of higher degree) containing $0$-dimensional fake linear cycles. It turns out that it is not hard to construct hypersurfaces with infinitely many fake linear cycles in any degree.
\end{remark}

\begin{theorem}
\label{thm5}
Let $X=\{f(x_0,x_1):=(x_0-r_1x_1)(x_0-r_2x_1)\cdots(x_0-r_dx_1)=0\}\subseteq\P^1$ be a smooth degree $d$ hypersurface with $r_i\in\Q$ for all $i=1,\ldots,d$. Consider for each $c\in \Q\setminus\{r_1,\ldots,r_d\}$ the polynomial
\begin{equation}
\label{eqPfake0dim}
P:=\frac{a\frac{\partial f}{\partial x_0}-b\frac{\partial f}{\partial x_1}}{x_0-cx_1}\in R^f_{d-2}
\end{equation}
for $a=\frac{\partial f}{\partial x_1}(c,1)$ and $b=\frac{\partial f}{\partial x_0}(c,1)$. Then 
\begin{equation}
\label{eqfake0dim}
\delta:=\res\left(\frac{P\cdot(x_0dx_1-x_1dx_0)}{f}\right)\in H^0(X,\Q)_\prim
\end{equation}
is a $0$-dimensional fake linear cycle.
\end{theorem}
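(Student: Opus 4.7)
The plan is first to verify that $\delta$ is a well-defined rational primitive class, then to compute its Hilbert function to show $\HF_\delta=\HF_{[p_i]}$ (where $p_i=(r_i{:}1)$ denotes a rational point of $X$), and finally to rule out that $\delta_\prim$ is a scalar multiple of any $[p_i]_\prim$.

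That $P\in\Q[x_0,x_1]_{d-2}$ is well-defined follows because the degree $d-1$ homogeneous polynomial $a\,\partial_0 F-b\,\partial_1 F$ vanishes at $(c,1)$ (since $ab-ba=0$) and hence, by homogeneity, along the whole line $\{x_0-cx_1=0\}$; as all the data is rational, $x_0-cx_1$ divides it in $\Q[x_0,x_1]$. Moreover $[P]\neq 0\in R^F_{d-2}$ because $J^F_{d-2}=0$ (all generators of $J^F$ have degree $d-1$), so $\delta\neq 0$. The class $\delta$ lies in $H^0(X,\Q)_\prim$ because its value at each rational point $p_i$ equals the local residue $-P(r_i,1)/\partial_0F(r_i,1)\in\Q$, and the sum of these residues vanishes: the rational $1$-form $P\,\Omega/F$ on $\P^1$ has no pole at infinity (since $\deg P+\deg\Omega=\deg F$), so by the residue theorem its residues at the $r_i$'s sum to zero.

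For the Hilbert function, the defining identity of $P$ reads $(x_0-cx_1)\cdot P=a\,\partial_0 F-b\,\partial_1 F\in J^F$, hence $x_0-cx_1\in(J^F:P)=J^{F,\delta}$. Consequently, $R^{F,\delta}$ is a graded quotient of $R^F/(x_0-cx_1)$. To compute the latter, I would substitute $x_0=cx_1$ in the generators of $J^F$: homogeneity gives $\partial_0 F(cx_1,x_1)=b\,x_1^{d-1}$ and $\partial_1 F(cx_1,x_1)=a\,x_1^{d-1}$, and Euler's relation yields $cb+a=dF(c,1)\neq0$ (because $c\notin\{r_1,\ldots,r_d\}$), forcing $(a,b)\neq(0,0)$; hence $R^F/(x_0-cx_1)\cong\C[x_1]/(x_1^{d-1})$, with Hilbert function $1,1,\ldots,1$ ($d-1$ ones). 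By \cref{defAG}, $R^{F,\delta}$ is Artinian Gorenstein with socle in degree $d-2$; being a graded quotient of $\C[x_1]/(x_1^{d-1})$, it must equal it, and so $\HF_\delta=\HF_{[p_i]}$ by \cref{ex:AGlincyc}.

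Finally, assume $d\ge 3$ (the case $d=2$ is trivial as $H^0(X,\Q)_\prim$ is one-dimensional and admits no fake class). If $\delta_\prim=\lambda\cdot[p_i]_\prim$ for some $\lambda\in\Q^\times$ and some $i$, then by \cref{rmkeqidealAG} we have $J^{F,\delta}=J^{F,[p_i]}$. This ideal would contain both $x_0-cx_1$ and $x_0-r_ix_1$ (the latter from the analogue of \cref{ex:AGlincyc} applied to the factor $x_0-r_ix_1$ of $F$), hence also $(c-r_i)x_1$; since $c\neq r_i$ this forces $x_1\in J^{F,[p_i]}$, and then $x_0\in J^{F,[p_i]}$, contradicting that its socle degree equals $d-2\ge 1$. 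The main technical step is the explicit computation of $R^F/(x_0-cx_1)$; once this is in hand, the Artinian Gorenstein rigidity completely pins down $R^{F,\delta}$, and the rest reduces to elementary manipulations of ideals.
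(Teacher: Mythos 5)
Your proof is correct, and while it follows the same overall skeleton as the paper's (show that $\delta$ is a rational primitive class, then pin down $J^{F,\delta}$ and compare Hilbert functions), it substitutes genuinely different arguments at the two key points. For rationality and primitivity, the paper expands $P$ in the $\Q$-basis of $\Q[x_0,x_1]_{d-2}$ formed by the polynomials $P_i$ attached to the rational points $(r_i:1)$ (these span because the residue map $R^F_{d-2}\to H^0(X,\C)_\prim$ is an isomorphism and the classes $[p_i]_\prim$ span $H^0(X,\Q)_\prim$), thereby exhibiting $\delta=\sum_i q_i[p_i]_\prim$ explicitly; you instead evaluate the residue pointwise as the local residues $-P(r_i,1)/\tfrac{\partial F}{\partial x_0}(r_i,1)\in\Q$ and get primitivity from the residue theorem. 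Both are valid; the paper's version has the advantage of producing the coefficients $q_i$ (which it uses in the subsequent worked example), while yours is more elementary and self-contained. For the identification of $J^{F,\delta}$, the paper simply asserts $(J^F:P)=\langle x_0-cx_1,x_0^{d-1},x_1^{d-1}\rangle$, whereas you derive it from $x_0-cx_1\in(J^F:P)$, the computation $R^F/(x_0-cx_1)\cong\C[x_1]/(x_1^{d-1})$ via Euler's relation, and Artinian Gorenstein rigidity; this, together with your explicit check that $\delta_\prim$ is not proportional to any $[p_i]_\prim$ (which \cref{deffake} requires and the paper leaves implicit through \cref{rmkeqidealAG} and $c\neq r_i$), makes your write-up more complete than the published one. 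Your aside on $d=2$ is also apt: there $H^0(X,\Q)_\prim$ is one-dimensional and no fake class can exist, so the statement should be read with $d\ge 3$; the paper does not flag this either.
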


\begin{proof}
For each point $p_i:=(r_i:1)\in X$ we know $[p_i]_\prim\in H^0(X,\Q)$. Moreover we can write it as a residue applying \cite[Theorem 1.1]{villaflor2021periods}
$$
[p_i]_\prim=\frac{-1}{d}\res\left(\frac{P_i\cdot(x_0dx_1-x_1dx_0)}{f}\right)\in H^0(X,\Q)_\prim
$$
where 
\begin{equation}
\label{eqPi}
P_i=\det\begin{pmatrix}
    1 & \frac{\frac{\partial f}{\partial x_0}}{x_0-r_ix_1}-\frac{f}{(x_0-r_ix_1)^2} \\
    -r_i & \frac{\frac{\partial f}{\partial x_1}}{x_0-r_ix_1}+r_i\frac{f}{(x_0-r_ix_1)^2}
\end{pmatrix}=\frac{r_i\frac{\partial f}{\partial x_0}+\frac{\partial f}{\partial x_1}}{x_0-r_ix_1}\in \Q[x_0,x_1]_{d-2}.
\end{equation}
Since all the points $[p_1]_\prim,\ldots,[p_d]_\prim$ generate the $\Q$-vector space $H^0(X,\Q)_\prim$ of dimension $d-1$, and the residue map is an isomorphism of $\C[x_0,x_1]_{d-2}=R^f_{d-2}\simeq H^0(X,\C)_\prim$, it follows that the polynomials $P_1,\ldots,P_d$ generate all $\Q[x_0,x_1]_{d-2}$ as $\Q$-vector space. In particular, since $c\in \Q$, then $P\in\Q[x_0,x_1]_{d-2}$ and so we can write it as a $\Q$-linear combination 
$$
P=q_1\cdot P_1+\cdots+q_d\cdot P_d.
$$
Hence $\delta=q_1\cdot [p_1]_\prim+\cdots+q_d\cdot [p_d]_\prim\in H^0(X,\Q)_\prim$ is a rational class. To see that it defines a fake linear cycle it is enough to see that 
$$
J^{f,\delta}=(J^f:P)=\langle x_0-cx_1,x_0^{d-1},x_1^{d-1}\rangle
$$
and so $\HF_\delta=\HF_{[p_i]}$.
\end{proof}

Now, as a corollary of \cref{thm5} we obtain \cref{thm3}.

\bigskip

\begin{proof}\textbf{of \cref{thm3}}
Pick any degree $d$ homogeneous polynomials $f_0,\ldots,f_{\frac{n}{2}}\in\Q[x,y]_d$ such that each $f_i$ has only simple rational roots. Define $X:=\{f_0(x_0,x_1)+f_1(x_2,x_3)+\cdots+f_\frac{n}{2}(x_n,x_{n+1})=0\}\subseteq\P^{n+1}$. For each $i=0,\ldots,\frac{n}{2}$ consider $X_i:=\{f_i(x_{2i},x_{2i+1})=0\}\subseteq\P^1$ and take any fake linear cycle $\delta_i\in H^0(X_i,\Q)_\prim$. Then by \cref{corHFjoin} 
$$
\delta:=J(\delta_0,\ldots,\delta_\frac{n}{2})\in H^{\frac{n}{2},\frac{n}{2}}(X,\Q)_\prim
$$
is a fake linear cycle.
\end{proof}

\begin{remark}
A consequence of \cref{thm5} and \cite[Theorem 1.1]{villaflor2022small} is that no automorphism of $\P^1$ transforms all points of the Fermat variety $X=\{x_0^d+x_1^d=0\}\subseteq\P^1$ into rational points for $d\neq 3,4,6$. On the other hand, it is easy to check that for degrees $d=3,4,6$ there exists an automorphism of $\P^1$ taking all Fermat points to rational points. This explains the presence of fake linear cycles in Fermat varieties of such degrees.
\end{remark}

\begin{example}
Let $X=\{f(x_0,x_1):=(x_0-r_1x_1)(x_0-r_2x_1)\cdots(x_0-r_6x_1)=0\}\subseteq\P^1$ with $r_1=0,$ $r_2=1,$ $r_3=\frac{1}{2},$ $r_4=\frac{1}{4},$ $r_5=\frac{1}{3},$ $r_6=\frac{2}{5}.$ Consider the same notation of \cref{thm5}, and take the fake linear cycle $\delta$ of the form \eqref{eqfake0dim} where the polynomial $P$ in \eqref{eqPfake0dim} is defined using the number $c=-1$. Let $P_i$ be the associated polynomial to the point $(r_i:1)\in X$ for $i=1,\ldots,5$ (this is computed explicitly in \eqref{eqPi}). Once we know explicitly all these polynomials, it is an elementary linear algebra problem to find the $\Q$-linear combination of the polynomial $P$ in terms of the polynomials $P_1,\ldots,P_5$, which is 
$$
P=-\frac{207283}{810}P_1-\frac{68941}{270}P_2-\frac{507311}{1620}P_3-\frac{26911}{180}P_4-\frac{891881}{1620}P_5.
$$
For the case of fake linear cycles in Fermat varieties of degree $d=3,4,6$ one first transforms the Fermat equation to one with only rational roots, and proceeds in the same way as before. In fact, the above example is isomorphic to the Fermat sextic under the composition of the following automorphisms of $\P^1$
$$\phi(x_0:x_1)=(x_0:x_0+x_1)$$
$$
\psi(x_0:x_1)=(x_0-\zeta_{12}x_1:(1+\zeta_6^{-1})(x_0-\zeta_{12}^3x_1)).
$$
We have that $\psi^* \phi^*(f)=-\frac{2\zeta_6^2+1}{40}(x_0^6+x_1^6)$ and the $0$-dimensional fake linear cycle $\delta\in H^0(X,\Q)_\prim$ is transformed to the fake linear cycle $\lambda=\psi^*\phi^*\delta$ inside the Fermat variety $\{x_0^6+x_1^6=0\}\subset\P^1$  given by
$$
\lambda=\res\left(\frac{P_\lambda\cdot(x_0dx_1-x_1dx_0)}{x_0^6+x_1^6}\right)
$$
with 
$$
P_\lambda=c_\lambda\frac{x_0^5-(c_0x_1)^5}{x_0-c_0x_1},
$$
where
$c_0=\zeta_{12}^{-3}\left(\frac{3\zeta_6^2-1}{3-\zeta_6^2}\right)\in \zeta_{12}^{-3}\cdot\mathbb{S}^1_{\Q(\zeta_6)}$ and $c_\lambda=\frac{\zeta_{12}(191\zeta_6+146)}{1-2\zeta_6}\in\Q(\zeta_{12})^\times.$
\end{example}

As a final result of this section we show the non smoothness of the Hodge loci associated to fake linear cycles.

\begin{theorem}
\label{thm6}
Let $n$ an even number and $d\ge 2+\frac{6}{n}$ an integer. For any degree $d$ homogeneous polynomials $f_0,\ldots,f_\frac{n}{2}\in\Q[x,y]_d$ with no multiple roots, let $$X=\{f_0(x_0,x_1)+f_1(x_2,x_3)+\cdots+f_\frac{n}{2}(x_n,x_{n+1})=0\}\subseteq\P^{n+1}.$$ 
For each $i=0,\ldots,\frac{n}{2}$ consider $X_i:=\{f_i(x_{2i},x_{2i+1})=0\}\subseteq\P^1$ and some $\delta_i\in H^0(X_i,\Q)_\prim$ with $\HF_{\delta_i}$ equal to the Hilbert function of a point. Let $\delta:=J(\delta_1,\ldots,\delta_\frac{n}{2})$, then $\delta$ is a fake linear cycle if and only if $V_\delta$ is not non-reduced or is reduced and singular at $X$.
\end{theorem}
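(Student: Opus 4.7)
The plan is to split the biconditional into its two implications, handling the easier direction first.

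For "$V_\delta$ not smooth $\Rightarrow \delta$ fake" I would argue by contrapositive. By \cref{corHFjoin}, $\HF_\delta$ coincides with $\HF_{[\P^{\frac{n}{2}}]}$; if $\delta$ is not fake, then \cref{deffake} forces $\delta_{\prim}$ to be a scalar multiple of the class of an actual linear cycle $\P^{\frac{n}{2}\prime}\subseteq X$, and \cref{rmkeqidealAG} together with the classical smoothness of Hodge loci of linear cycles (see \cite[Proposition 17.8]{ho13}) yields that $V_\delta=V_{[\P^{\frac{n}{2}\prime}]}$ is smooth.

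For the forward direction I would first pin down when $\delta$ is fake in terms of its join factors. By \cref{thm2}, $R^{F,\delta}=\bigotimes_{i=0}^{n/2}R^{F_i,\delta_i}$, and by \cref{thm5} each $J^{F_i,\delta_i}=\langle x_{2i}-c_ix_{2i+1},\,x_{2i}^{d-1},\,x_{2i+1}^{d-1}\rangle$ for some $c_i\in\Q$. Consequently the linear variety $\Lambda:=\{x_{2i}=c_ix_{2i+1}:i=0,\ldots,n/2\}\subseteq\P^{n+1}$ lies in $X$ iff $F_i(c_i,1)=0$ for every $i$, i.e.\ iff every $\delta_i$ is proportional to an actual point of $X_i$. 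Therefore $\delta$ is fake iff at least one factor $\delta_{i_0}$ is a fake $0$-cycle (with $c_{i_0}$ not a root of $F_{i_0}$); after relabelling, assume $\delta_0$ is fake, so $F_0(c_0,1)\neq 0$.

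The crucial local computation is that for $G_1:=x_0-c_0x_1\in J^{F_0,\delta_0}_1$ the quadratic fundamental form of $\delta_0$ satisfies $q_1(G_1,G_1)=-dF_0(c_0,1)\in R^{F_0}_0\simeq\C$, a non-zero scalar. From the form of $P_{\delta_0}$ in \cref{thm5} one has $G_1\cdot P_{\delta_0}=a\,\partial F_0/\partial x_0-b\,\partial F_0/\partial x_1$, so the coefficients $Q_0=a,\,Q_1=-b$ in Maclean's formula are constants, reducing the formula to $q_1(G_1,G_1)=-(a+bc_0)$, which by Euler's identity applied to $F_0$ of degree $d$ equals $-dF_0(c_0,1)$.

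To finish I argue by contradiction: assume $V_\delta$ is smooth, so $q$ vanishes in degree $e=d$. Writing $\delta=J(\delta_0,\delta')$ with $\delta':=J(\delta_1,\ldots,\delta_{n/2})$ (so $k=0$ in the notation of \cref{corqff}) and applying item (ii) with $\ell=1$ and $j=2d-5$, the constraint $2(d-\ell)-j=3\leq (d-2)n/2$ is exactly the hypothesis $d\geq 2+6/n$. The conclusion forces $q_1(G_1,G_1)\cdot\C[x_0,x_1]_{2d-5}=0$ in $R^{F_0}/\langle P_{\delta_0}\rangle$, which by non-vanishing of $q_1(G_1,G_1)$ and surjectivity of $\C[x_0,x_1]_{2d-5}\twoheadrightarrow R^{F_0}_{2d-5}$ gives the inclusion $R^{F_0}_{2d-5}\subseteq\langle P_{\delta_0}\rangle_{2d-5}$. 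But the Hilbert function of $R^{F_0}$ (a complete intersection of two degree-$(d-1)$ forms, since $F_0$ has no multiple roots) gives $\dim R^{F_0}_{2d-5}=\min(2d-4,2)=2$, whereas $\dim\langle P_{\delta_0}\rangle_{2d-5}=\dim R^{F_0,\delta_0}_{d-3}=1$ (using $R^{F_0,\delta_0}\simeq\C[x_1]/(x_1^{d-1})$), producing the contradiction $2>1$. The main subtle point is the choice $j=2d-5$, which is precisely the degree at which the numerical constraint of \cref{corqff}(ii) saturates the hypothesis $d\geq 2+6/n$ while still leaving a non-zero cokernel $R^{F_0}_j/\langle P_{\delta_0}\rangle_j$ to contradict.
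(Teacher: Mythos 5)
Your proof is correct and follows essentially the same route as the paper's: both directions are handled identically, with the key step being the computation $q_1(G_1,G_1)=\pm d\,F_0(c_0,1)\neq 0$ followed by an application of \cref{corqff}(ii) with $e=d$, $\ell=1$, $j=2d-5$, $k=0$. The only cosmetic difference is that you establish $(R^{F_0}/\langle P_{\delta_0}\rangle)_{2d-5}\neq 0$ by the explicit dimension count $2>1$, whereas the paper deduces it from the Gorenstein socle of $(J^{F_0}:x_0-c_0x_1)$; both are valid.
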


\begin{proof}
If all $\delta_i$ are the primitive classes of points (up to scalar multiplication), then $\delta$ is the primitive class of a linear cycle (up to scalar multiplication), and so $V_\delta$ is known to be smooth. If some $\delta_i$ is a $0$-dimensional fake linear cycle, then we can write (up to scalar multiplication) $\delta_i=\res\left(\frac{P\cdot(x_{2i}dx_{2i+1}-x_{2i+1}dx_{2i})}{f_i}\right)$ for
$$
P=\frac{a\frac{\partial f_i}{\partial x_{2i}}-b\frac{\partial f_i}{\partial x_{2i+1}}}{x_{2i}-cx_{2i+1}}
$$
where $a=\frac{\partial f_i}{\partial x_{2i+1}}(c,1)$, $b=\frac{\partial f_i}{\partial x_{2i}}(c,1)$ and $f_i(c,1)\neq 0$. If we compute the quadratic fundamental form $q_i$ of $V_{\delta_i}$ at the term $x_{2i}-cx_{2i+1}\in J^{f_i,\delta_i}_1$ we get
$$
q_i(x_{2i}-cx_{2i+1},x_{2i}-cx_{2i+1})=a+bc=d\cdot f_i(c,1)\neq0.
$$
We claim that $R^{f_i}/\langle P\rangle$ is non-zero at degree $2d-5$. In fact, since $J^{f_i}$ is Artinian Gorenstein of socle in degree $2d-4$, then $(J^{f_i}:x_{2i}-cx_{2i+1})$ is Artinian Gorenstein of socle in degree $2d-5$ and so there exists some $Q\in \C[x_{2i},x_{2i+1}]_{2d-5}$  such that $Q\cdot (x_{2i}-cx_{2i+1})\notin J^{f_i}$. But since $P\cdot (x_{2i}-cx_{2i+1})\in J^{f_i}$, it follows that $Q\notin \langle P\rangle$, and so $(R^{f_i}/\langle P\rangle)_{2d-5}\neq 0$ as claimed. Therefore
$$
q_i|_{\text{Sym}^2(J^{f_i,\delta_i}_1)}\cdot \C[x_{2i},x_{2i+1}]_{2d-5}\neq 0\in R^{f_i}/\langle P\rangle.
$$
Using that $d\ge 2+\frac{6}{n}$ we can apply \cref{corqff} (ii) with the values $e=d$, $\ell=1$, $j=2d-5$, $k=0$ to conclude that the quadratic fundamental form $q$ associated to $V_\delta$ is non-zero in degree $d$, and so $V_\delta$ is not smooth.
\end{proof}






\bibliographystyle{abbrv}

\bibliography{ref}



\bigskip

\noindent{\sc Dirección de Investigación, Vicerrectoría Académica \\
Instituto de Matemática y Fisíca, Universidad de Talca}\\
{\sc Avenida Lircay s/n, Casilla 721, Talca, Chile}\\
\textit{Email address:} {\tt georgy11235@gmail.com}\\
\\
\noindent{\sc Departamento de Matemática, Universidad Técnica Federico Santa María}\\
{\sc Avenida España 1680, Valparaíso, Chile}\\
\textit{Email address:} {\tt roberto.villaflor@usm.cl}

\end{document}